\documentclass[11pt]{article}
\usepackage[english]{babel}
\usepackage{amsthm, amsmath, amssymb, amsbsy, amsfonts, latexsym, euscript}
\usepackage [latin1]{inputenc}
\usepackage{graphicx}
\theoremstyle{plain}
\newtheorem{theorem}{Theorem}[section]

\newtheorem{lemma}[theorem]{Lemma}
\newtheorem{corollary}[theorem]{Corollary}
\theoremstyle{definition}
\newtheorem{definition}[theorem]{Definition}

\newtheorem{remark}[theorem]{Remark}

\begin{document}

\title{Galois theory for finite fields}

\author{Askold Khovanskii}

\maketitle

\begin{abstract}
This note presents  Galois  theory for finite fields. It was written as a handout for the  MAT401 course ``Polynomial equations and fields'' taught at the University of Toronto  in Spring 2026. We use without proofs some basic properties of finite fields and of finite field extensions which we already covered in class. Firstly, we  describe an extension  $K\subset F$ of a finite field $K$ of a given degree $n$. We show that the set of all intermediate fields for this extension is in one-to-one correspondence with the set of all divisors $k$ of the degree $n$. Then we  describe the Galois group of this extension which is the cyclic group of order $n$. The set of subgroups of this group also is in one-to-one correspondence  with the set of all divisors $k$ of the degree $n$. It allows us to prove the Galois correspondence for that extension. In the last section, we state basic theorems of Galois theory for arbitrary fields which will be proven later in the course.
\end{abstract}

\section{Introduction}\label{intro}

Below, we present  Galois theory for finite fields. We use without proofs some
 basic  properties of  field extensions which we study in class. In particular, it includes  basic properties of finite fields, listed in the  section \ref{prelim}. We use
 splitting fields of polynomials over a given field.  We also use the following elementary but very useful  Degree formula: if $K\subset B\subset F$ is a chain of finite extensions, then the degree of the extension $K\subset F$ is the product of degrees of the extension $K\subset B$ and of the extension $B\subset F$.

We  recall   some simple properties of cyclic groups. In the theory  of finite fields, cyclic groups appear in two  different ways: on the one hand, the multiplicative group of a finite field is a cyclic group, on the other hand, the Galois group of  finite extension of any finite field  is a cyclic group.

In the section \ref{sectwo},
we recall the famous  Chinese  Remainder  Theorem and its versions.

In the section \ref{secthre}, we show that the multiplicative group of roots of unity in an arbitrary  field is a cyclic group.

A description of the multiplicative group $U(n)$ of invertible elements in the ring $\Bbb Z \mod n$ can be reduced to the case when $n$ is a power of a prime number $p$.  We describe groups $U(p^m)$ in the section \ref{u}.

In the section \ref{extrasec},
 we show that the multiplicative group of an arbitrary finite field is a cyclic group. As an application, we prove a version of the Primitive Element Theorem for finite fields.

In the section \ref{secfour},
we explore a  finite extension $K\subset F$ of a finite field $K$ of  any given degree $n$. We show that such an extension is  unique. We  describe all intermediate fields for that extension.

In the section \ref{five},
we study automorphisms of finite fields. In particular, we show that any automorphism of a finite field $F$ of characteristic $p$ is a power of the Frobenius map $\Phi:F\to F$, where $\Phi(a)=a^p$.

In the section \ref{six},
we define the Galois group $G(F,K)$ of a finite extension $K\subset F$, where $K$ and $F$ are finite fields. We show that the group $G(F,K)$ is well defined and describe this group.

In the section \ref{seven},
we discuss the Galois correspondence for the extension $K\subset F$ of degree $kn$  of a finite field $K$. It provides a one-to-one correspondence
between the set of all subgroups  of the Galois Group $G(F,K)$ of the  extension  and  the set of all intermediate subfields  of this extension.

In the section \ref{seceight},
we present without proof basic theorems of Galois theory  for arbitrary fields. Proofs of these theorems will be presented in the course later.

\section{Preliminaries  on finite fields}\label{prelim}
In this section, we list  simple results on finite fields which we studied in class. We will use these results without proof. Such proofs can be found in many places, see for example a textbook \cite{Rotman} which we use in the course.

\begin{enumerate}
\item Each finite field $F$ has  characteristic $p$, where $p$ is some prime number,  i.e., $F$ contains the subfield field $\Bbb Z_p$. The field $F$ is a finite extension of the field $\Bbb Z_p$. The  degree $n$ of this extension together with the characteristic $p$ totally determines the finite field $F$ which is   called {\it the Galois field}    $\Bbb F_q$, where $q=p^n$. The field  $\Bbb F_q$ can be considered as  an $n$-dimensional vector space over the field $\Bbb Z_p$. It contains $q=p^n$ elements.

\item A field $K$ contains the field $\Bbb F_q$ if and only if $K$ has characteristic $p$  and the polynomial $x^q-x$  has exactly $q=p^n$ roots in $K$. Moreover, the set of elements in the subfield $\Bbb F_q\subset K$ is equal to the set of all roots in $K$ of the polynomial  $x^q-x$.
\item For any field $K$ of characteristic $p$ the map $\Phi:K\to K$, which sends $x$ to $x^p$, is called the {\it Frobenius homomorphism}. The  homomorphism $\Phi$ may not   be an one-to-one map: some elements of $K$  could have no preimages under the map $\Phi$. But if $K$ is a finite field, then $\Phi:K\to K$ is  an onto map.

 \item  The statement 2) means that the  $\Bbb F_q$ belongs to $K$ if and only if the $q$-th power $\Phi^q:K\to K$ of the  Frobenius homomorphism has exactly $q=p^n$ fixed points, and the  set $\Bbb F_q\subset K$ is equal to the set of all fixed points of $\Phi^n$.
   \end{enumerate}

\section{Around Chinese  Remainder  Theorem}\label{sectwo}

In this section, we recall  the famous  Chinese  Remainder  Theorem and its versions.

Let $m_1,\dots,m_n\in \Bbb Z$ be a collection of natural numbers such that any two different numbers $m_i, m_j$ from the collection are relatively prime. Let $\pi:\Bbb Z\to \Bbb Z/m_1\Bbb Z \oplus\ldots\oplus \Bbb Z/m_n \Bbb Z$ be a ring homomorphism from  $\Bbb Z$ to the direct sum of the factor rings $\Bbb Z/m_i \Bbb Z$ which sends $n\in  \Bbb Z $ to $(n\mod m_1, \ldots, n\mod m_n)$.

\begin{lemma}\label{map} The kernel of the homomorphism $\pi$ is the principal ideal $(d)\subset \Bbb Z$, where $d=m_1\cdot \ldots \cdot m_n$.
\end{lemma}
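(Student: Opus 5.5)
The plan is to prove the two inclusions $(d)\subseteq\ker\pi$ and $\ker\pi\subseteq(d)$ separately. The first is immediate. The second reduces to the elementary fact that if pairwise coprime numbers each divide an integer $a$, then so does their product; I would prove that fact by induction on the number $n$ of moduli.

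First, $\ker\pi$ consists exactly of those integers $a$ with $a\equiv 0 \mod m_i$ for every $i$, that is, of the integers divisible by every $m_i$. Since $d=m_1\cdot\ldots\cdot m_n$ is divisible by each $m_i$, every multiple of $d$ lies in the kernel, so $(d)\subseteq \ker\pi$.

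For the reverse inclusion, take $a\in\ker\pi$, so $m_i\mid a$ for all $i$. I would show $m_1\cdots m_k\mid a$ by induction on $k$, the case $k=1$ being given. For the inductive step, set $M=m_1\cdots m_k$ and assume $M\mid a$.
\begin{enumerate}
\item Since $m_{k+1}$ is relatively prime to each of $m_1,\dots,m_k$, it is relatively prime to their product $M$: a prime dividing both $m_{k+1}$ and $M$ would divide some $m_j$ with $j\le k$, which is impossible.
\item Choose integers $u,v$ with $uM+vm_{k+1}=1$.
\item Multiplying by $a$ gives $a=uMa+vm_{k+1}a$.
\item Both terms on the right are divisible by $Mm_{k+1}$: the first because $m_{k+1}\mid a$, the second because $M\mid a$.
\end{enumerate}
Hence $Mm_{k+1}\mid a$. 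At $k=n$ this gives $d\mid a$, so $\ker\pi\subseteq(d)$.

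The only step with real content is the coprimality step. It uses B\'ezout's identity, or equivalently the fact that a prime dividing a product divides a factor. Both are standard facts about $\Bbb Z$, and I would quote them rather than reprove them. Everything else is bookkeeping.
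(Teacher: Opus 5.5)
Your proof is correct and follows the same route as the paper. The paper identifies $\ker\pi$ with $\bigcap_i (m_i)$ and simply asserts that this intersection equals $(m_1\cdots m_n)$ by pairwise coprimality; you supply the missing justification for that equality, by induction using B\'ezout's identity.
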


\begin{proof} The principal  ideal $(m_i)\subset \Bbb Z$ of the number $m_i$ is the kernel of the factorization map $\Bbb Z\to \Bbb Z/ m_i \Bbb Z$. The kernel of $\pi$ is the ideal $\cap_{1\leq i\leq n} (m_i)\subset \Bbb Z$. Since the numbers $m_i$ are relatively prime, we have
$\cap_{1\leq i\leq n} (m_i)=(m_1\cdot \ldots \cdot m_n)=(d)$.
\end{proof}

\begin{corollary}\label{ringiso} If the natural numbers $m_1,\dots,m_n\in \Bbb Z$ are relatively prime, then the ring $\Bbb Z/ d \Bbb Z$, where $d=m_1\cdot \ldots \cdot m_n$, is isomorphic  to the direct sum $\oplus_{1\leq i\leq n} \Bbb Z/ m_i \Bbb Z$ of the rings $\Bbb Z/m_i \Bbb Z$.
\end{corollary}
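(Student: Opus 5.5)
We apply the first isomorphism theorem for rings to the homomorphism $\pi:\Bbb Z\to \Bbb Z/m_1\Bbb Z \oplus\ldots\oplus \Bbb Z/m_n \Bbb Z$ introduced above. By Lemma \ref{map}, $\ker \pi=(d)$ with $d=m_1\cdot\ldots\cdot m_n$. Hence $\pi$ induces an injective ring homomorphism
$$\bar\pi:\Bbb Z/d\Bbb Z\to \oplus_{1\leq i\leq n}\Bbb Z/m_i\Bbb Z,$$
which sends the class of $a$ modulo $d$ to the tuple of residues $(a \mod m_1,\ldots, a\mod m_n)$. It is well defined and injective precisely because its kernel is the image of $(d)$, namely zero.

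It remains to show that $\bar\pi$ is onto. Rather than constructing preimages explicitly, I would use a counting argument. The source $\Bbb Z/d\Bbb Z$ has exactly $d$ elements. The target, a direct sum of finite rings, has $|\Bbb Z/m_1\Bbb Z|\cdots|\Bbb Z/m_n\Bbb Z|=m_1\cdots m_n=d$ elements. An injective map between finite sets of the same cardinality is a bijection, so $\bar\pi$ is a bijective ring homomorphism, that is, a ring isomorphism.

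The only substantive input is the kernel computation, which Lemma \ref{map} already provides. That computation is where pairwise coprimality is used, since it gives $\cap(m_i)=(m_1\cdots m_n)$. Once the kernel is known, the rest is the bookkeeping above.

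One could also prove surjectivity constructively. Using Bézout, choose $e_i$ with $e_i\equiv 1 \pmod{m_i}$ and $e_i\equiv 0\pmod{m_j}$ for $j\neq i$, by writing $1=u_i m_i+v_i (d/m_i)$ and setting $e_i=v_i d/m_i$. Then $\sum a_i e_i$ is a preimage of $(a_1,\ldots,a_n)$. The counting argument avoids this, so I would present the counting argument.
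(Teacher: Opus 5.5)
Your proposal is correct and follows the paper's route: Lemma \ref{map} shows that $\pi$ induces an injective ring homomorphism $\Bbb Z/d\Bbb Z\to\oplus_{i}\Bbb Z/m_i\Bbb Z$, and since both rings have $d$ elements it is an isomorphism. Your optional B\'ezout construction of the elements $e_i$ is also correct, but, as you note, it is not needed.
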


\begin{proof} By Lemma \ref{map}, the homomorphism $\pi$ provides an embedding of the finite ring $\Bbb Z/d \Bbb Z$ to the direct sum $\oplus_{1\leq i\leq n} \Bbb Z/ m_i \Bbb Z$ of the finite rings $\Bbb Z/m_i \Bbb Z$. Since the rings $\Bbb Z/d \Bbb Z$ and $\oplus_{1\leq i\leq n} \Bbb Z/ m_i \Bbb Z$  contain the same number $d$ of elements, the embedding  $\pi$ is an isomorphism.
\end{proof}

\begin{corollary}\label{invert} If the natural numbers $m_1,\dots,m_n$ are  pairwise relatively prime, then  the multiplicative group $U(d)$ of invertible elements in the ring $\Bbb Z/ d \Bbb Z$, where $d=m_1\cdot \ldots \cdot m_n$, is isomorphic  to the direct product $\prod_{1\leq i \leq n} U(m_i)$ of the multiplicative groups $U(m_i)$ of invertible elements in the ring $\Bbb Z/ m_i \Bbb Z$.
\end{corollary}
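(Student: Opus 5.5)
The plan is to derive this from Corollary \ref{ringiso}, using the general fact that a ring isomorphism restricts to an isomorphism of unit groups, together with a description of the units of a direct sum of rings.

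First, Corollary \ref{ringiso} says the map $\pi$ is a ring isomorphism $\Bbb Z/d\Bbb Z \to \oplus_{1\leq i\leq n}\Bbb Z/m_i\Bbb Z$. Any ring isomorphism $\phi:R\to S$ sends units to units: if $ab=1$ then $\phi(a)\phi(b)=\phi(1)=1$. Applying the same argument to $\phi^{-1}$ shows that $\phi$ restricts to a bijection $U(R)\to U(S)$. Since $\phi$ is multiplicative, this restriction is a group isomorphism. So $U(d)$ is isomorphic to the unit group of the direct sum $\oplus_{1\leq i\leq n}\Bbb Z/m_i\Bbb Z$.

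Second, I identify the units of a direct sum $R_1\oplus\dots\oplus R_n$ with multiplication defined componentwise. An element $(a_1,\dots,a_n)$ is invertible if and only if there is $(b_1,\dots,b_n)$ with $a_ib_i=1$ in each $R_i$. This holds exactly when every $a_i\in U(R_i)$. Hence the unit group of the direct sum equals the set $\prod_{1\leq i\leq n} U(R_i)$, and the group operation on it is componentwise multiplication, which is precisely the direct product group structure. Taking $R_i=\Bbb Z/m_i\Bbb Z$ and composing with the first step gives $U(d)\cong\prod_{1\leq i\leq n} U(m_i)$. Explicitly, the isomorphism sends $a \bmod d$ to $(a\bmod m_1,\dots,a\bmod m_n)$.

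There is no real obstacle; the argument is routine. The only point needing care is the second step: inverses in a direct sum must be checked componentwise, and the identity element of the direct sum is $(1,\dots,1)$, which is where the hypothesis that each $m_i\geq 1$ (so that each ring is meaningful) enters implicitly.
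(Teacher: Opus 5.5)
Your proof is correct and takes the same route as the paper, which simply states that the result follows from Corollary \ref{ringiso}. You fill in the two routine details the paper leaves implicit: a ring isomorphism restricts to an isomorphism of unit groups, and the units of a direct sum are exactly the tuples of units.
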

\begin{proof} Corollary \ref{invert} follows from Corollary \ref{ringiso}.
\end{proof}

Below, in section \ref{u}, we will provide a description of the  group $U(p^m)$ for any prime number $p$ and any natural natural $m$.

 Now we will compute the number of elements in the group $U(p^m)$ and, more generally, the number of elements  in the group $U(n)$ for an arbitrary natural number $n$.

\begin{corollary} Let $n=p_1^{k_1}\cdot\ldots\cdot p_m^{k_m}$ be the prime factorization of a natural number $n$. Then the group $U(n)$ contains $\prod_{1 \leq i \leq n} p_i^{m_i-1} (p_i-1)$ elements.
\end{corollary}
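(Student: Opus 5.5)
The plan is to reduce to prime powers using Corollary \ref{invert}, and then count the prime-power case directly. The statement has an index typo: the intended formula is $|U(n)|=\prod_{1\leq i\leq m} p_i^{k_i-1}(p_i-1)$, and I will prove that version.

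\textbf{Step 1 (reduction).} Set $m_i=p_i^{k_i}$. Since the primes $p_1,\dots,p_m$ are distinct, the numbers $m_1,\dots,m_m$ are pairwise relatively prime. Their product is $n$. By Corollary \ref{invert}, $U(n)$ is isomorphic to the direct product $\prod_{1\leq i\leq m} U(p_i^{k_i})$. The order of a direct product of finite groups is the product of the orders, so
$$|U(n)|=\prod_{1\leq i\leq m} |U(p_i^{k_i})|.$$
It remains to show that $|U(p^k)|=p^{k-1}(p-1)$ for a prime $p$ and $k\geq 1$.

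\textbf{Step 2 (prime powers).} A class $a \bmod p^k$ is invertible exactly when $\gcd(a,p^k)=1$.
\begin{itemize}
\item If $\gcd(a,p^k)=1$, B\'ezout gives $ua+vp^k=1$, so $u$ is an inverse of $a$ modulo $p^k$.
\item If $\gcd(a,p^k)\neq 1$, then $p\mid a$, so $p$ divides $ab$ for every $b$. Hence $ab\not\equiv 1 \pmod{p^k}$, and $a$ has no inverse.
\end{itemize}
So $U(p^k)$ consists of the residues in $\{0,1,\dots,p^k-1\}$ not divisible by $p$. The multiples of $p$ in this range are $0,p,2p,\dots,(p^{k-1}-1)p$, which is $p^{k-1}$ numbers. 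Therefore
$$|U(p^k)|=p^k-p^{k-1}=p^{k-1}(p-1).$$

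\textbf{Step 3 (combine).} Substituting Step 2 into the product from Step 1 gives the formula.

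There is no real obstacle here. The only points needing care are the invertibility criterion in Step 2 and stating the corrected indexing of the product.
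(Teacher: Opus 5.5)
Your proof is correct and follows the paper's route: reduce to prime powers via Corollary \ref{invert}, then count the $p^{k-1}$ residues modulo $p^k$ that are divisible by $p$. Beyond the paper, you justify the invertibility criterion (which the paper takes for granted) and fix the indexing typo, so the formula reads $\prod_{1\leq i\leq m} p_i^{k_i-1}(p_i-1)$.
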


\begin{proof} By Corollary \ref{invert}, it is enough to prove the above corollary for $n=p^m$. The number of elements in the group $U(n)$ is equal to the number of elements in $\Bbb Z/ (n)$ which are relatively prime with $n$. For $n=p^m$ the number of elements in $\Bbb Z/ (p^m)$, not relatively prime  with $p^m$,  is equal to $p^m:p=p^{m-1}$. Indeed, a number is not relatively prime with $p^m$ if and only if it is divisible by $p$.
\end{proof}

\begin{corollary}[Chinese  Remainder  Theorem] \label{china} If the natural numbers $m_1,\dots,m_n\in \Bbb Z$ are pairwise relatively prime, then the cyclic group $\Bbb Z_d $ of  order  $d=m_1\cdot \ldots \cdot m_n$ is isomorphic  to the direct sum $\oplus_{1\leq i\leq n} \Bbb Z_{ m_i}$ of the cyclic groups $\Bbb Z_ {m_i}$ of orders $m_i$.
\end{corollary}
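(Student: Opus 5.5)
The plan is to deduce this corollary directly from Corollary \ref{ringiso}, by passing from rings to their underlying additive groups. The cyclic group $\Bbb Z_d$ of order $d$ is, up to isomorphism, the additive group of the ring $\Bbb Z/d\Bbb Z$. Likewise, each $\Bbb Z_{m_i}$ is the additive group of $\Bbb Z/m_i\Bbb Z$. The additive group of a direct sum of rings is the direct sum of their additive groups.

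The steps are as follows.

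First, recall from Lemma \ref{map} that the map $\pi$ is a ring homomorphism, so in particular it is a homomorphism of additive groups. Its kernel is $(d)$.

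Second, Corollary \ref{ringiso} says that the induced map $\Bbb Z/d\Bbb Z\to\oplus_{1\leq i\leq n}\Bbb Z/m_i\Bbb Z$ is a bijective ring homomorphism. Forgetting multiplication, this induced map is a bijective homomorphism of additive groups. It is therefore an isomorphism
\[
\Bbb Z_d \cong \oplus_{1\leq i\leq n} \Bbb Z_{m_i}.
\]

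Third, to be explicit, we identify $\Bbb Z_k$ with the additive group $\Bbb Z/k\Bbb Z$. Any cyclic group of order $k$ is isomorphic to $\Bbb Z/k\Bbb Z$ via $g^j\mapsto j \bmod k$ for a generator $g$. So the statement does not depend on the chosen model of the cyclic groups.

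There is no real obstacle here. The only point needing care is the hypothesis: Lemma \ref{map} uses that $\cap_i (m_i)=(m_1\cdots m_n)$, and this holds precisely because the $m_i$ are \emph{pairwise} relatively prime. If one wanted a self-contained check, one could argue by induction on $n$: for coprime $a,b$, any common multiple of $a$ and $b$ is a multiple of $ab$, since $ab\mid \operatorname{lcm}(a,b)$. Alternatively, one can avoid the kernel computation entirely. The element $(1,\dots,1)$ in $\oplus\Bbb Z_{m_i}$ has order $\operatorname{lcm}(m_1,\dots,m_n)=d$, so it generates a cyclic subgroup of order $d$, which is the whole group.
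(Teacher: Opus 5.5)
Your proposal is correct and follows the paper's own argument. The paper simply deduces the statement from Corollary~\ref{ringiso} by regarding the ring isomorphism $\Bbb Z/d\Bbb Z\cong\oplus_{i}\Bbb Z/m_i\Bbb Z$ as an isomorphism of additive groups. Your extra remarks are valid and make explicit what the paper leaves implicit: the identification of any cyclic group of order $k$ with $\Bbb Z/k\Bbb Z$, and the alternative argument that $(1,\dots,1)$ has order $\operatorname{lcm}(m_1,\dots,m_n)=d$.
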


\begin{proof} Corollary \ref {china} also follows from Corollary \ref{ringiso}.
\end{proof}

\section{Multiplicative group of  roots of unity}\label{secthre}

In this section, we recall basic  properties of cyclic groups, and applications of these properties to studies of roots of unity in an arbitrary field.

\begin{lemma}\label{cyclic1} Any subgroup of a cyclic group is also a cyclic group. The cyclic group $\Bbb Z_n$ of order $n$ contains a subgroup of order $k$ if and only if $k$ divides $n$. Moreover, a subgroup of order $k$ is unique and it consists of all elements $x\in \Bbb Z_n\sim \Bbb Z/\mod n$  satisfying the relation $k x=0 \mod n$.
\end{lemma}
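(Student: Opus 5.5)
I would work additively, identifying the cyclic group of order $n$ with $\Bbb Z_n\sim \Bbb Z/\mod n$, and pull everything back to subgroups of $\Bbb Z$ via the quotient map $\rho:\Bbb Z\to \Bbb Z/n\Bbb Z$. The basic fact I will use is that every subgroup of $\Bbb Z$ has the form $m\Bbb Z$. To see this, take $m$ to be the least positive element of the subgroup (or $m=0$ if the subgroup is trivial). Dividing any element by $m$ with remainder, the remainder lies in the subgroup and is smaller than $m$, so it must be $0$.

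\textbf{Step 1: every subgroup is cyclic.} Let $H\subset \Bbb Z_n$ be a subgroup. Its preimage $\rho^{-1}(H)$ is a subgroup of $\Bbb Z$ containing $n\Bbb Z$, so $\rho^{-1}(H)=m\Bbb Z$ for some $m\geq 1$. Since $n\in m\Bbb Z$, we get $m\mid n$. Then $H=\rho(m\Bbb Z)$ is generated by the class of $m$, hence cyclic. Its order is $n/m$, because the class of $m$ has additive order exactly $n/m$ in $\Bbb Z/n\Bbb Z$. In particular the order $k=n/m$ of any subgroup divides $n$; this also follows directly from Lagrange's theorem.

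\textbf{Step 2: existence for each divisor.} Conversely, if $k\mid n$, put $m=n/k$. The subgroup generated by $m \bmod n$ has exactly the $k$ elements $0,m,2m,\dots,(k-1)m$, so a subgroup of order $k$ exists.

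\textbf{Step 3: uniqueness and the description.} Let $H_k=\{x\in\Bbb Z_n : kx=0 \bmod n\}$. This is a subgroup, being the kernel of the homomorphism $x\mapsto kx$. An integer $x$ satisfies $n\mid kx$ if and only if $(n/k)\mid x$, so $H_k$ is exactly the subgroup $\langle m\rangle$ from Step 2 and has $k$ elements. Now let $H$ be any subgroup of order $k$. By Lagrange's theorem every $x\in H$ satisfies $kx=0$, so $H\subset H_k$. Since both have $k$ elements, $H=H_k$. This proves uniqueness and the stated description at the same time.

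The argument is elementary throughout. The step needing the most care is the equivalence $n\mid kx \iff (n/k)\mid x$ in Step 3. It holds because $kx=(n/k)^{-1}n\,x\cdot k^2/n$ is awkward to manipulate directly; instead, write $n=km$, so $km\mid kx$ if and only if $m\mid x$ after cancelling $k$. Everything else is bookkeeping between $\Bbb Z$ and its quotient.
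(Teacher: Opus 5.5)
Your proof is correct and follows essentially the same route as the paper. Cyclicity comes from pulling subgroups of $\Bbb Z_n$ back to subgroups of $\Bbb Z$, which are generated by their least positive element. Uniqueness comes from Lagrange placing any subgroup of order $k$ inside the solution set of $kx=0 \bmod n$, which has exactly $k$ elements. Your version is more careful than the paper on two points: it keeps the roles of $k$ and $n/k$ straight, and it proves the existence direction explicitly.

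The only blemish is the formula $kx=(n/k)^{-1}n\,x\cdot k^2/n$ in your last paragraph, which is false (the right-hand side equals $k^3x/n$). Just delete it: the cancellation $km\mid kx \iff m\mid x$ that follows is what actually does the work.
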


\begin{proof} Any nonzero subgroup $G\subset \Bbb  Z$ is a cyclic group generated by the smallest positive element in $G$. The group $\Bbb Z_n$ is a factor group of $\Bbb Z$, thus each nonzero subgroup of $\Bbb Z_n$ is a cyclic group, whose order $k$ divides $n$, which contains $n:k$ elements. All elements $x\in G$   satisfy the equation $k x=0/ \mod n$. But the number of solutions of this equation is also equal to $n:k$. Thus, $G$ coincides with the set of solutions $kx=0\mod n$ and, therefore, it is unique.
\end{proof}

\begin{theorem}\label{cyclic2} A finite commutative  group $G$ is cyclic if and only if for any natural number  $k$ it has  at most one cyclic subgroup of order $k$.
\end{theorem}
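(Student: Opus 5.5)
One direction is immediate from Lemma \ref{cyclic1}. If $G\simeq \Bbb Z_n$ is cyclic, then every subgroup of $G$ is cyclic. For each $k$ there is at most one subgroup of order $k$, namely the set of solutions of $kx=0 \mod n$. In particular there is at most one cyclic subgroup of order $k$.

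For the converse, let $G$ be a finite commutative group of order $n$ in which, for every $k$, there is at most one cyclic subgroup of order $k$. I would count elements by their orders.

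\begin{enumerate}
\item For each divisor $k$ of $n$, let $\psi(k)$ be the number of elements of $G$ of order $k$. By Lagrange's theorem every element has order dividing $n$, so $\sum_{k\mid n}\psi(k)=n$.
\item If $\psi(k)>0$, pick an element $g$ of order $k$. Every element of order $k$ generates a cyclic subgroup of order $k$. By hypothesis that subgroup must equal $\langle g\rangle\simeq\Bbb Z_k$. Hence all elements of order $k$ are generators of $\Bbb Z_k$, and $\psi(k)=\varphi(k)$, the number of generators of $\Bbb Z_k$. In all cases $\psi(k)\le\varphi(k)$.
\item Next I need the identity $\sum_{k\mid n}\varphi(k)=n$. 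I would prove it by applying step 2 to the cyclic group $\Bbb Z_n$ itself. By Lemma \ref{cyclic1}, for each $k\mid n$ it has exactly one subgroup of order $k$, and that subgroup is cyclic. So every element of order $k$ lies in this copy of $\Bbb Z_k$ and is one of its generators. Hence $\Bbb Z_n$ has exactly $\varphi(k)$ elements of order $k$, and summing over $k$ gives the identity.
\item Comparing the two sums gives $n=\sum\psi(k)\le\sum\varphi(k)=n$. So equality holds termwise, and in particular $\psi(n)=\varphi(n)\ge 1$. Thus $G$ has an element of order $n$ and is cyclic.
\end{enumerate}

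The main obstacle is the counting identity in step 3. It must be obtained from Lemma \ref{cyclic1} applied to $\Bbb Z_n$, rather than from facts about the totient function not yet stated in the paper.

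Commutativity is not actually needed for this argument.
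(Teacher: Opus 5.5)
Your proof is correct, and it takes a different route from the paper's.

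\textbf{The paper's route.} The paper proves the converse by invoking the structure theorem for finite commutative groups, writing $G\simeq\Bbb Z_{m_1}\oplus\dots\oplus\Bbb Z_{m_r}$. If two of the $m_i$ shared a factor $d>1$, the corresponding summands would contain two distinct cyclic subgroups of order $d$. So the hypothesis forces the $m_i$ to be pairwise relatively prime, and Corollary \ref{china} (the Chinese Remainder Theorem) then makes $G$ cyclic. This ties the result to the preceding section. However, it rests on the classification of finite commutative groups, which the paper quotes without proof. It also uses commutativity essentially.

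\textbf{Your route.} Your element count by orders needs only Lagrange's theorem and Lemma \ref{cyclic1}. Deriving $\sum_{k\mid n}\varphi(k)=n$ from $\Bbb Z_n$ itself avoids any outside facts about the totient. The argument is therefore self-contained. As you observe, it also proves the stronger statement for arbitrary finite groups, since commutativity is never used.

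Your forward direction is the same as the paper's.
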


\begin{proof} According to Lemma \ref{cyclic1}, a cyclic group  for any  natural number $k$ has at most one cyclic subgroup of order $k$.
On the other hand, any finite  commutative group $G$ is representable as a direct sum of cyclic groups of some orders $m_1,\dots,m_n$. If some different  numbers $m_i$ and $m_j$ have a nontrivial common
divisor $d$, then both cyclic groups $\Bbb Z_{m_i}$ and $\Bbb Z_{m_j}$ have cyclic subgroups of order $d$. In that case, according to Lemma \ref{cyclic1}, the group $G$ is not a cyclic group. If the numbers $m_1,\dots, m_n$ are pairwise relatively  prime, then, by Corollary \ref{china}, the group $G$ is cyclic.
\end{proof}

For any field $K$ and for any natural number $n$ let $K^*_n\subset K$ be the set of all roots of unity of order $n$ in the field $K$, i.e., $K^*_n$ is the set  of all solution in $K$ of the equation  $x^n-1=0$.

\begin{theorem} For any field $K$  and for any natural number $n$ the set $K^*_n \subset K$ form a subgroup in the  multiplicative group $K^*$ of the field $K$. Moreover,  $K^*_n$ is a cyclic group.
\end{theorem}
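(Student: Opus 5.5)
First I will check the subgroup property directly. The element $1$ lies in $K^*_n$, and every element of $K^*_n$ is nonzero, since $0^n-1=-1\neq 0$. If $a^n=1$ and $b^n=1$, then by commutativity of multiplication in $K$ we get $(ab)^n=a^nb^n=1$ and $(a^{-1})^n=(a^n)^{-1}=1$. So $K^*_n$ is a subgroup of the commutative group $K^*$. It is finite, because the polynomial $x^n-1$ of degree $n$ has at most $n$ roots in the field $K$. Hence $K^*_n$ is a finite commutative group, and Theorem \ref{cyclic2} applies to it.

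By Theorem \ref{cyclic2}, it suffices to show that for every natural number $k$ the group $K^*_n$ contains at most one cyclic subgroup of order $k$. Suppose $H\subset K^*_n$ is a cyclic subgroup of order $k$. Every element $x\in H$ satisfies $x^k=1$, since the order of $x$ divides $k$. Thus $H$ is contained in the set of roots in $K$ of the polynomial $x^k-1$, and this set has at most $k$ elements. Since $H$ has exactly $k$ elements, $H$ coincides with the set of all roots of $x^k-1$ lying in $K$ (all of which then lie in $K^*_n$). This set does not depend on the choice of $H$, so any two cyclic subgroups of order $k$ coincide, and $K^*_n$ is cyclic.

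The only step with real content is the bound ``a nonzero polynomial of degree $k$ over a field has at most $k$ roots''. It is a standard fact from class: each root $a$ splits off a linear factor $x-a$, and a field has no zero divisors. This is exactly where we use that $K$ is a field, since the argument fails over rings with zero divisors such as $\Bbb Z/8\Bbb Z$. Note also that the argument never needs $k$ to divide $n$. The uniqueness argument works for every $k$, so no case distinction is required.
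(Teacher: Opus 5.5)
Your proof is correct and follows essentially the same route as the paper: check the subgroup axioms, then apply Theorem \ref{cyclic2}, using that $x^k-1$ has at most $k$ roots in $K$, so any subgroup of order $k$ must coincide with the full set of roots of $x^k-1$. You also make explicit two steps the paper leaves implicit: the finiteness of $K^*_n$ (because $x^n-1$ has at most $n$ roots) and the Lagrange step showing $x^k=1$ for every $x\in H$.
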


\begin{proof} If $x^n=1$ and $y^n=1$, then $(xy)^n=1$. If $x^n=1$, then $(x^{-1})^n=1$. Thus, $K^*_n$ is a subgroup in the  multiplicative group $K^*$ of the field $K$.
The group $K^*_n$ is a finite commutative group. For any natural number $k$  the equation $x^k=1$ has  at most $k$ solutions in $K$. Thus, the group $K^*_n$ could contain at most one cyclic subgroup of order $k$: such a subgroup exists if and only if the polynomial $x^k-1$ has exactly $k$ different roots in the field $K$.  Thus, by Theorem \ref{cyclic2},  $K^*_n$ is a cyclic group.
\end{proof}

\section{Multiplicative  group $U(p^n)$}\label{u}

In this section, we will show that for any prime number $p\neq 2$ and any natural number $n$ the multiplicative group $U(p^n)$ is a cyclic group. We also  will describe groups $U(2^n)$ for any natural number  $n$. First of all, the group $U(p)$ for any prime $p\neq 2$  is a cyclic group,  since it is the multiplicative group of the field $\Bbb Z_p$ which coincides with the group of roots of unity of the order $p-1$ in the field $\Bbb  Z_p$. Below, we describe  groups $U(p^n)$ assuming that $n>1$.

Assume that $p\neq 2$. Consider the ring homomorphism $\pi: \Bbb Z/(p^n)\to \Bbb Z_p$. Its kernel consists of all integers modulo  $p^n$ which are divisible by $p$, thus it consists of all not invertible elements of the ring $\Bbb Z/(p^n)$.    The kernel of the corresponding group  homomorphism  $\pi_1:U(p^n)\to U(p)$ of the multiplicative groups of these rings contains the element $(1+p) \mod p^n$.

\begin{lemma}\label{kernel}  For $p\neq 2$ the kernel of the homomorphism $\pi_1$ is a cyclic group of the order $p^{n-1}$.
\end{lemma}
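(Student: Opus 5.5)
The plan is to identify the kernel of $\pi_1$ explicitly, count it, and then show that the element $1+p$ generates it.

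The kernel of $\pi_1$ consists of the residues $x \bmod p^n$ with $x\equiv 1 \bmod p$, that is, $x=1+pa$ with $a$ taken modulo $p^{n-1}$. Each such $x$ is prime to $p$, so it is invertible. Hence $\ker\pi_1$ has exactly $p^{n-1}$ elements. This is consistent with the count $|U(p^n)|=p^{n-1}(p-1)$ obtained above, together with the surjectivity of $\pi_1$.

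Since the group has order $p^{n-1}$, the order of $1+p$ is a power of $p$. It therefore suffices to show that
\[
(1+p)^{p^{n-2}}\not\equiv 1 \pmod{p^n},
\]
for then the order of $1+p$ is exactly $p^{n-1}$ and the kernel is cyclic, generated by $1+p$.

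The key step is the following claim, proved by induction on $k\ge 0$: for $p\neq 2$,
\[
(1+p)^{p^k}=1+p^{k+1}u_k \quad\text{with } p\nmid u_k .
\]
The case $k=0$ is trivial, with $u_0=1$. For the inductive step, write $y=p^{k+1}u_k$ and expand with the binomial theorem:
\[
(1+y)^p = 1 + p y + \binom{p}{2}y^2 + \dots + y^p .
\]
The linear term is $p^{k+2}u_k$. Each middle term $\binom{p}{j}y^j$ with $2\le j\le p-1$ is divisible by $p\cdot p^{2(k+1)}$, hence by $p^{k+3}$. The last term $y^p$ is divisible by $p^{p(k+1)}$. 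Since $p\ge 3$, we have $p(k+1)\ge 3(k+1)\ge k+3$, so this term is divisible by $p^{k+3}$ as well. Thus
\[
(1+p)^{p^{k+1}}=1+p^{k+2}u_{k+1}, \qquad u_{k+1}\equiv u_k \pmod p,
\]
and so $p\nmid u_{k+1}$.

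Taking $k=n-2$ gives $(1+p)^{p^{n-2}}=1+p^{n-1}u_{n-2}$ with $p\nmid u_{n-2}$, which is not congruent to $1$ modulo $p^n$. This proves the lemma.

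The main obstacle is the estimate on the last binomial term $y^p$. This is exactly where $p\neq 2$ is used: for $p=2$ the term $y^2$ is only divisible by $p^{2(k+1)}$, which falls short of $p^{k+3}$ when $k=0$. Indeed, for $p=2$ one gets $(1+2)^2=9$, so $u_1$ is odd but the induction cannot start from $k=0$ in the same way.
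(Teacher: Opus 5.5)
Your proof is correct and follows the same route as the paper: you count $\ker\pi_1$ as $p^{n-1}$, then use the binomial expansion to show that $1+p$ has order exactly $p^{n-1}$, with $p\neq 2$ entering through the top binomial term; carrying a unit $u_k$ through the induction is in fact more careful than the paper, whose congruence $(1+p^m)^p\equiv 1+p^{m+1}\pmod{p^{m+2}}$ is stated only for $1+p^m$ itself and so does not directly iterate. One slip in your closing aside: for $p=2$ you get $9=1+2^2\cdot 2$, so $u_1=2$ is \emph{even}, not odd, and that is precisely where the claim breaks down.
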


Lemma \ref{kernel}  is based on the following observation.

\begin{lemma}\label{divisibility} For any prime $p\neq 2$ and for any $n>1$ the order  of the element $1+p \mod p^n$ in the group $U(p^n)$ is equal to $p^{n-1}$.
\end{lemma}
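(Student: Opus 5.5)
The plan is to prove, by induction on $k\geq 0$, the exact divisibility statement
\[
(1+p)^{p^k}=1+p^{k+1}u_k,\qquad u_k\in\Bbb Z,\ p\nmid u_k .
\]
That is, $(1+p)^{p^k}-1$ is divisible by $p^{k+1}$ but not by $p^{k+2}$. Once this is established, the lemma follows quickly. The order of $1+p \bmod p^n$ divides $|U(p^n)|$, but more directly it divides $p^{n-1}$, because taking $k=n-1$ gives $(1+p)^{p^{n-1}}\equiv 1 \bmod p^n$. So the order is $p^j$ for some $j\le n-1$. On the other hand, taking $k=n-2$ gives $(1+p)^{p^{n-2}}=1+p^{n-1}u_{n-2}$ with $p\nmid u_{n-2}$, so $(1+p)^{p^{n-2}}\not\equiv 1\bmod p^n$. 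Hence the order is exactly $p^{n-1}$; this step uses $n>1$.

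The base case $k=0$ is trivial, with $u_0=1$. For the inductive step I will write $(1+p)^{p^{k+1}}=(1+p^{k+1}u_k)^p$ and expand by the binomial theorem:
\[
(1+p^{k+1}u_k)^p=1+p\cdot p^{k+1}u_k+\binom{p}{2}p^{2(k+1)}u_k^2+\sum_{i=3}^{p}\binom{p}{i}p^{i(k+1)}u_k^i .
\]
The linear term equals $p^{k+2}u_k$, which is exactly divisible by $p^{k+2}$. It then suffices to show that every remaining term is divisible by $p^{k+3}$. Then $u_{k+1}=u_k+p(\dots)$ is again prime to $p$.

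The main obstacle is checking these higher-order terms, and this is where $p\neq 2$ enters.

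1. For the $i=2$ term: $p$ divides $\binom p2=p(p-1)/2$ because $p$ is odd. So the term is divisible by $p^{1+2k+2}=p^{2k+3}$, which contains $p^{k+3}$ since $k\ge 0$.
2. For $3\le i\le p-1$: $p\mid\binom pi$, and the power of $p$ is at least $1+3(k+1)\ge k+3$.
3. For $i=p$: the last term is $p^{p(k+1)}u_k^p$. Here $p\ge3$ gives $p(k+1)\ge 3k+3\ge k+3$.

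These cases cover all terms, completing the induction. For $p=2$, case 1 fails when $k=0$, since $(1+2)^2=9=1+8$, which is consistent with the exclusion of $p=2$ in the lemma.
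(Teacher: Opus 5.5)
Your proof is correct and uses the same idea as the paper: expand the $p$-th power binomially and use $p\mid\binom{p}{2}$ for odd $p$. This shows that raising to the $p$-th power increases the exact power of $p$ dividing $(1+p)^{p^k}-1$ by one, so $(1+p)^{p^{n-2}}\not\equiv 1 \bmod p^n$.

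Your version is tighter than the paper's in two places. First, you get the upper bound $(1+p)^{p^{n-1}}\equiv 1 \bmod p^n$ from the same induction, rather than from $|\ker\pi_1|=p^{n-1}$. Second, by carrying the unit $u_k$ you make the iteration legitimate. The paper states its congruence $(1+p^m)^p\equiv 1+p^{m+1} \bmod p^{m+2}$ only for the exact element $1+p^m$, and for $k\ge 1$ the element $(1+p)^{p^k}$ is not literally of that form.
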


\begin{proof} The element $1+p \mod p^n$ belongs to the kernel of the group homomorphism $\pi_1$ which is  a subgroup in $U(p^n)$ of the order $p^{n-1}$. Thus, the order  $k$ of the element $(1+p) \mod p^n$ divides the number $p^{n-1}$, so $k=p^m$ for some number $m$ satisfying the following  inequalities $1\leq m\leq n-1$.
For any natural number $m$ the identity $(1+p^m)^p \equiv  1 + p^{m+1}  \mod p^{m++2}$ holds. Indeed, $(1+p^m)^p= 1+ p \cdot p^m  + \frac {p(p-1)}{2}p^{2m} +\dots \equiv 1+p^{m+1} \mod p^{m+2}$.
(Note  that  in the proof of above  identities  for $m=1$  one
use that for $p\neq 2$ the number $\frac {p(p-1)}{2}$ is divisible by $p$.)
\end{proof}

\begin{proof}[Proof of Lemma \ref{kernel}]  As we showed above, for any $k<m-1$ the identity $(1+p)^{p^k}\equiv 1+ p^{k+1} \mod p^{k+2}$ holds.  The smallest degree $k$, for which $(1+p)^{p^k}\equiv 1\mod p^n$,  is $k=p^{n-1}$. Thus, the order  of the element $1+p\mod p^n$ is equal to the number of elements in the group $\ker \pi_1$ containing it. So this element generates the group $\ker \pi_1$ and it is cyclic.
\end{proof}

\begin{theorem}\label{ucyclic}
For any prime $p\neq 2$ and for any natural number $n$ the group $U(p^n)$ is a cyclic group.
\end{theorem}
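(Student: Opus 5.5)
The plan is to exhibit $U(p^n)$ as a group whose order splits into two coprime factors, each of which is realized by a cyclic subgroup, and then to glue these subgroups by the Chinese Remainder Theorem (Corollary \ref{china}). The case $n=1$ was already handled: $U(p)$ is the multiplicative group of $\Bbb Z_p$, which is the group of roots of unity of order $p-1$ in the field $\Bbb Z_p$ and hence cyclic. So we assume $n>1$. By the counting corollary, $|U(p^n)|=p^{n-1}(p-1)$. The homomorphism $\pi_1:U(p^n)\to U(p)$ is onto, since any residue prime to $p$ lifts to a residue prime to $p^n$. By Lemma \ref{kernel}, its kernel is cyclic of order $p^{n-1}$, generated by $1+p$.

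The next step is to produce an element of order $p-1$ in $U(p^n)$. Let $g\in U(p^n)$ be any lift of a generator $\bar g$ of the cyclic group $U(p)$. The order $r$ of $g$ is divisible by $p-1$, because $\pi_1(g)=\bar g$ has order $p-1$. Also $r$ divides $p^{n-1}(p-1)$, so $r=p^j(p-1)$ for some $j$. Then $h=g^{p^j}$ has order exactly $p-1$. (Alternatively, one could take the Teichm\"uller-type element $g^{p^{n-1}}$, whose order divides $p-1$ and which still maps to a generator of $U(p)$, since $p^{n-1}$ is prime to $p-1$.)

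Now $h$ has order $p-1$ and $1+p$ has order $p^{n-1}$. These orders are coprime and the group is commutative, so the product $h(1+p)$ has order $p^{n-1}(p-1)=|U(p^n)|$. To see this, suppose $(h(1+p))^t=1$. Then $h^t=(1+p)^{-t}$ lies in $\langle h\rangle\cap\langle 1+p\rangle$, which is trivial because the orders are coprime. So both $p-1$ and $p^{n-1}$ divide $t$. Hence $U(p^n)$ is cyclic.

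Alternatively, this last step can be phrased through Theorem \ref{cyclic2}: $U(p^n)\cong \langle h\rangle\times\ker\pi_1$, and this is cyclic by Corollary \ref{china}.

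The main obstacle is not the gluing, which is routine, but relying on Lemma \ref{kernel}. Its proof uses the congruence $(1+p^m)^p\equiv 1+p^{m+1}\pmod{p^{m+2}}$, which needs $p\neq2$. I would take that lemma as given, since it is stated earlier. Beyond it, the only delicate point is verifying that $\pi_1$ is surjective, so that a lift of a generator of $U(p)$ exists.
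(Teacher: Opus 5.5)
Your proof is correct and follows the paper's route: lift a generator of $U(p)$ along the surjection $\pi_1$ to get a cyclic subgroup of order $p-1$, then combine it with the cyclic kernel $\ker\pi_1=\langle 1+p\rangle$, whose order $p^{n-1}$ is coprime to $p-1$. You are more careful than the paper, which asserts that an arbitrary lift $b$ already has order $p-1$; that is false in general, since $2$ lifts a generator of $U(3)$ but has order $6$ in $U(9)$. Your passage from $g$ to $g^{p^j}$, and your explicit computation of the order of $h(1+p)$, supply the steps the paper skips.
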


\begin{proof} The homomorphism $\pi_1:U(p^n)\to U(p)$ is a surjective map. Let $a\in U(p)$ be a generator of the cyclic group $U(p)$ and let $b\in U(p^n)$ be such an element that $\pi_1 (b)=a$. Then $b$ generates a cyclic subgroup $G_b\subset U(p^n)$ of  the order $p-1$. One can see that the group $U(p^n)$ is a direct sum of the cyclic subgroup $\ker \pi_1$ of the  order $p^{n-1}$ and the cyclic subgroup $G_b$ of the order $p-1$. Since $p^{n-1}$ and $p-1$ are relatively prime numbers, the group $U(p^n)$ is cyclic.
\end{proof}

The case $p=2$ has  to be considered separately. Consider  the multiplicative group  $U(2^n)$ of the ring $\Bbb Z/  2^n\Bbb Z$  for $n\geq 3$.  Elements of the group $U(2^n)$ may be reduced modulo $8$.
 Any element of $U(2^n)$ is  equal either to $\pm 1\mod 8$ or to $\pm 3 \mod 8$.

\begin{lemma}\label{mod2}
\begin{enumerate}
 \item If $a\equiv \pm 1\mod 8$, then for any $k\geq 1$ we have $a^{2^k}\equiv 1\mod2^{k+3} $.

\item  If $a\equiv \pm 3\mod 8$, then for any $k \geq  1$ we have $a^{2^k}\equiv 2^{k+2} \mod2^{k+3} $.
\end{enumerate}
\end{lemma}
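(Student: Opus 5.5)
We prove both parts together by induction on $k$, tracking the exact $2$-adic shape of $a^{2^k}$ rather than only its residue. One remark first: as printed, part 2 says $a^{2^k}\equiv 2^{k+2}\mod 2^{k+3}$, which cannot hold because $a^{2^k}$ is odd. The intended statement, and the one we prove, is $a^{2^k}\equiv 1+2^{k+2}\mod 2^{k+3}$. For example, $3^2=9\equiv 1+8 \mod 16$. This is exactly what is needed to read off the order of $a$ in $U(2^n)$ in the same way as in Lemma \ref{divisibility}.

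\emph{Base case $k=1$.} Write $a=\varepsilon+8t$ with $\varepsilon\in\{\pm1,\pm3\}$. Then
$$a^2=\varepsilon^2+16\varepsilon t+64t^2\equiv \varepsilon^2 \mod 16 .$$
This gives $a^2\equiv 1\mod 2^4$ when $\varepsilon=\pm1$, and $a^2\equiv 9=1+2^3\mod 2^4$ when $\varepsilon=\pm3$. These are the two claims for $k=1$.

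\emph{Inductive step.} We pass from $k$ to $k+1$ by squaring, using $a^{2^{k+1}}=(a^{2^k})^2$.

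In case 1, the hypothesis gives $a^{2^k}=1+2^{k+3}s$ for some integer $s$. Squaring,
$$a^{2^{k+1}}=1+2^{k+4}s+2^{2k+6}s^2\equiv 1\mod 2^{k+4},$$
which is the claim for $k+1$.

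In case 2, the hypothesis gives $a^{2^k}=1+2^{k+2}u$ with $u$ odd. Squaring,
$$a^{2^{k+1}}=1+2^{k+3}u+2^{2k+4}u^2 .$$
Since $k\ge 1$, we have $2k+4\ge k+4$, so the last term vanishes modulo $2^{k+4}$. Since $u$ is odd, $2^{k+3}u\equiv 2^{k+3}\mod 2^{k+4}$. Hence
$$a^{2^{k+1}}\equiv 1+2^{k+3}\mod 2^{k+4},$$
which is the claim for $k+1$.

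The only delicate point is the formulation of the inductive hypothesis in case 2. One must carry the statement "$a^{2^k}-1$ equals $2^{k+2}$ times an odd number", not merely a congruence modulo $2^{k+3}$. The oddness of $u$ is what fixes the new residue $1+2^{k+3}$. Everything else is the binomial expansion of a square, the same computation as in the proof of Lemma \ref{divisibility}. There the term $\frac{p(p-1)}{2}$ needed $p\neq2$; here we instead start from the stronger base congruence modulo $16$.
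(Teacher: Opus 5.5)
Your proof is correct and is essentially the paper's argument: the same base computation modulo $16$, then induction on $k$ by squaring. Your corrected reading $a^{2^k}\equiv 1+2^{k+2} \pmod{2^{k+3}}$ of part 2 is what the paper's proof actually establishes, and its inductive hypothesis $a^{2^k}=1+2^{k+2}+2^{k+3}n$ is your $1+2^{k+2}u$ with $u=1+2n$ odd. So the congruence modulo $2^{k+3}$ is already equivalent to your odd-multiple form, and your ``delicate point'' holds automatically rather than requiring an extra strengthening.
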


\begin{proof} One can prove this lemma by induction in $k$.

\begin{enumerate}
 \item For $k=1$ we have $a^{2^1}=a^2= (\pm 1+ 8 n)^2= 1\pm 16 n+16n^2= 1\mod 16= 1\mod 2^{1+3}$. If for some $k$ we have $a^{2^k}=1 +2^{k+3} n$, then $a^{2^{k+1}}=1+ 2\cdot 2^{k+3}n + 2^{2k+6}n^2 \equiv 1\mod 2^{(k+1)+3}$. The needed statement is proven.

\item For $k=1$ we have $a^{2^1}=a^2= (\pm 3+ 8 n)^2= 9 \pm 16\cdot 3n +16n^2=1+ 8\mod 16= 1+ 2^{1+2} \mod 2^{1+3}$. If for some $k$ we have $a^{2^k}=1   +2^{k+2} +2^{k+3} n $, then $a^{2^{k+1}}=(1+2^{k+2})^2+ 2\cdot 2^{k+3}n (1+2^{k+2})  +2^{2(k+3)}n^2 \equiv 1 +2^{k+3}\mod 2^{k+4}$.  The needed statement is proven.

\end{enumerate}
\end{proof}

\begin{theorem}\label{ucyclictwo}
 For any natural number $n>2$ the group $U(2^n)$ is isomorphic to the product $\Bbb Z/2^{n-2}\Bbb Z\times \Bbb Z/ 2 \Bbb Z$ of cyclic groups  $\Bbb Z/2^{n-2}\Bbb Z$ and $\Bbb Z/ 2 \Bbb Z$.
\end{theorem}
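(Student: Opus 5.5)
We plan to show that $U(2^n)$, which has $2^{n-1}$ elements, is the internal direct product of the cyclic subgroup generated by $3 \bmod 2^n$ and the subgroup $\{\pm 1\}$ of order $2$. The argument uses Lemma \ref{mod2} to compute orders.

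\textbf{Step 1: the element $3$ has order exactly $2^{n-2}$.} Its order divides $|U(2^n)|=2^{n-1}$, so it is a power of $2$.

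We apply part 2 of Lemma \ref{mod2} with $a=3$, reading its conclusion as $a^{2^k}\equiv 1+2^{k+2} \bmod 2^{k+3}$; this is what its proof actually establishes.
\begin{itemize}
\item Taking $k=n-3$ (for $n\ge 4$) gives $3^{2^{n-3}}\equiv 1+2^{n-1}\not\equiv 1 \bmod 2^n$. For $n=3$ the same conclusion is the direct check $3\not\equiv 1 \bmod 8$.
\item Taking $k=n-2$ gives $3^{2^{n-2}}\equiv 1 \bmod 2^{n+1}$, hence also modulo $2^n$.
\end{itemize}
So the order of $3$ is exactly $2^{n-2}$, and $H=\langle 3\rangle\cong \Bbb Z/2^{n-2}\Bbb Z$.

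\textbf{Step 2: $-1\notin H$.} Every power $3^j$ is congruent to $1$ or $3$ modulo $8$, whereas $-1\equiv 7 \bmod 8$. So $-1$ cannot be a power of $3$ modulo $2^n$, and $H\cap\{\pm1\}=\{1\}$.

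\textbf{Step 3: conclude.} The group is commutative and the two subgroups intersect trivially. Hence the multiplication map $H\times\{\pm1\}\to U(2^n)$ is an injective homomorphism. Both sides have $2^{n-2}\cdot 2=2^{n-1}$ elements, so the map is an isomorphism. This gives $U(2^n)\cong \Bbb Z/2^{n-2}\Bbb Z\times\Bbb Z/2\Bbb Z$.

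As a remark, the same count shows that $H$ is precisely the set of residues $\equiv 1,3 \bmod 8$, and $-H$ is the set of residues $\equiv 5,7 \bmod 8$.

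The main obstacle is Step 1: extracting the exact order from Lemma \ref{mod2}. Its statement for $\pm3$ has a typo ($2^{k+2}$ should read $1+2^{k+2}$), and the boundary case $n=3$, where $k=0$ is not covered, has to be checked directly. Once the order of $3$ is pinned down, the rest is counting.
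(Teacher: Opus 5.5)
Your proof is correct, and it takes a somewhat different route from the paper's.

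\textbf{The paper's route.} It uses Lemma~\ref{mod2} (both parts) to show that every element of $U(2^n)$ has order at most $2^{n-2}$, so the group is not cyclic. It then notes that $3$ has order $2^{n-2}$, so $\langle 3\rangle$ has index $2$. It concludes by asserting that the only non-cyclic commutative group of order $2^{n-1}$ with a cyclic subgroup of index $2$ is $\Bbb Z/2^{n-2}\Bbb Z\times\Bbb Z/2\Bbb Z$. That last step tacitly relies on the structure theorem for finite abelian groups.

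\textbf{Your route.} You exhibit an explicit complement to $\langle 3\rangle$. The mod-$8$ observation (powers of $3$ are $\equiv 1,3$, while $-1\equiv 7$) gives $\langle 3\rangle\cap\{\pm1\}=\{1\}$, and counting then gives the internal direct product. This avoids part~1 of Lemma~\ref{mod2} and the classification theorem altogether. It also yields explicit generators $3$ and $-1$, and the description of $\langle 3\rangle$ as the residues $\equiv 1,3 \bmod 8$. The paper's version is shorter but leans on an unproved structural fact.

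You were also right to correct the typo in part~2 of Lemma~\ref{mod2}, and to check $n=3$ separately, which the paper leaves implicit.

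\textbf{One small slip in Step~1.} With $k=n-2$, the corrected lemma gives $3^{2^{n-2}}\equiv 1+2^{n}\pmod{2^{n+1}}$, not $3^{2^{n-2}}\equiv 1\pmod{2^{n+1}}$. For example, $3^2=9\not\equiv 1 \bmod 16$. Reducing modulo $2^n$ still gives $3^{2^{n-2}}\equiv 1$, so your conclusion stands.
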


\begin{proof} The group $U(2^n)$ is a commutative group of the order $2^{n-1}$.  Lemma \ref{mod2} implies that the order of each element of the group is less than $2^{n-1 }$, thus the group is not cyclic. By Lemma \ref{mod2} part 2), the order of the  element $a\equiv 3\mod 2^n$ is equal to
 $2^{n-2}$.  Thus, the group $U(2^n)$ contains a cyclic subgroup isomorphic to the group  $\Bbb Z/2^{n-2}\Bbb Z$. This index of this subgroup in $U(2^n)$ is equal to two. The only commutative group with such properties is the group
$\Bbb Z/2^{n-2}\Bbb Z\times \Bbb Z/ 2 \Bbb Z$.
\end{proof}

One can easily check the following lemma which completes  the description of the groups $U(p^n)$.

\begin{lemma}\label{easy} The group $U(4)$ is isomorphic to the group $\Bbb Z/2 \Bbb Z$. The group $U(2)$ contains only an identity element.
\end{lemma}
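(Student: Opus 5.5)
The plan is to verify Lemma \ref{easy} by listing the invertible residues directly, since both rings are tiny.

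For $U(4)$, the elements of $\Bbb Z/4\Bbb Z$ are $0,1,2,3$. An element is invertible if and only if it is relatively prime to $4$, that is, odd. So $U(4)=\{1,3\}$, which has two elements and agrees with the count $2^{2-1}(2-1)=2$ from the corollary on the order of $U(n)$. To identify the group structure, I will compute $3\cdot 3=9\equiv 1 \mod 4$. Hence $3$ has order $2$, and $U(4)$ is the cyclic group of order two. The map sending $1\mapsto 0$ and $3\mapsto 1$ is then an isomorphism onto $\Bbb Z/2\Bbb Z$. (Any group of order $2$ is isomorphic to $\Bbb Z/2\Bbb Z$ anyway, so checking the order already suffices.)

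For $U(2)$, the ring $\Bbb Z/2\Bbb Z=\{0,1\}$ is the field $\Bbb Z_2$. Its multiplicative group of nonzero elements is $\{1\}$, which is the identity element alone. Again this matches the formula $2^{0}(2-1)=1$.

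There is no real obstacle here. The only point worth remarking on is why these cases sit outside Theorem \ref{ucyclictwo}: for $n=2$ the formula $\Bbb Z/2^{n-2}\Bbb Z\times\Bbb Z/2\Bbb Z$ would give $\Bbb Z/1\times\Bbb Z/2\cong\Bbb Z/2\Bbb Z$, which is consistent with the answer for $U(4)$. The argument in Theorem \ref{ucyclictwo}, however, uses reduction modulo $8$, which is unavailable when $n\le 2$. That is why the small cases are handled separately by the direct enumeration above.
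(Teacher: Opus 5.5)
Your proof is correct and is essentially the same as the paper's. The paper gives no argument beyond saying the lemma ``can be easily checked'', and your direct enumeration ($U(4)=\{1,3\}$ with $3\cdot 3\equiv 1$ modulo $4$, and $U(2)=\{1\}$) is exactly that check.
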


\section{Multiplicative group of finite field}\label{extrasec}

In this section, using the  result on roots of unity,
 we show   that the multiplicative group of an arbitrary finite field is a cyclic group. As an application, we prove a version of the Primitive Element Theorem for finite fields.

\begin{theorem}\label{multfield} The multiplicative group $\Bbb F^*_q$ of the Galois field $\Bbb F_q$ with $q=p^n$ is a cyclic group of order $q-1=p^n-1$.
\end{theorem}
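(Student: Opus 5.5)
The plan is to reduce the statement to the theorem of Section \ref{secthre}, which says that for any field $K$ and any natural number $m$ the set $K^*_m$ of roots of unity of order $m$ is a cyclic subgroup of $K^*$. The idea is to show that the whole multiplicative group $\Bbb F^*_q$ coincides with $K^*_m$ for $K=\Bbb F_q$ and $m=q-1$.

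First, by item 1 of Section \ref{prelim}, the field $\Bbb F_q$ contains exactly $q=p^n$ elements. Hence $\Bbb F^*_q$ is a finite commutative group of order $q-1$.

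Second, I show that every element of $\Bbb F^*_q$ satisfies $x^{q-1}=1$. There are two routes.
\begin{enumerate}
\item By item 2 of Section \ref{prelim}, every element of $\Bbb F_q$ is a root of $x^q-x=x(x^{q-1}-1)$. So every nonzero element is a root of $x^{q-1}-1$.
\item By Lagrange's theorem, the order of each element of a group of order $q-1$ divides $q-1$.
\end{enumerate}
Either way, $\Bbb F^*_q\subset K^*_{q-1}$. The reverse inclusion is trivial, since roots of unity are nonzero. Therefore $\Bbb F^*_q=K^*_{q-1}$.

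Third, the theorem on roots of unity says $K^*_{q-1}$ is cyclic. Hence $\Bbb F^*_q$ is a cyclic group of order $q-1=p^n-1$.

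There is no real obstacle here. The only point needing care is the identification $\Bbb F^*_q=K^*_{q-1}$, which is immediate from the count of elements together with Lagrange's theorem or the preliminary fact about $x^q-x$.

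As an alternative, one could apply Theorem \ref{cyclic2} directly to $\Bbb F^*_q$. A cyclic subgroup of order $k$ consists of $k$ roots of $x^k-1$, and a polynomial of degree $k$ has at most $k$ roots in a field. So there is at most one such subgroup for each $k$, and Theorem \ref{cyclic2} then gives cyclicity.
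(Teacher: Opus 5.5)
Your proposal is correct and follows the paper's proof: the paper also identifies the $q-1$ nonzero elements of $\Bbb F_q$ with the roots of $x^{q-1}-1$, i.e.\ with the group of roots of unity of order $q-1$, and then uses the theorem of Section \ref{secthre} that this group is cyclic. Your alternative via Theorem \ref{cyclic2} is just the proof of that roots-of-unity theorem written out directly.
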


\begin{proof} The set of all $q-1$ nonzero elements of the field $\Bbb F_q$ coincides with the set of all roots of the polynomial $x^{q-1}-1$.
\end{proof}

\begin{corollary}\label{finprimel} In any  finite field $\Bbb F_q$, there is an element $a$ such that any nonzero element $b$ of the field is representable in the form $b=a^m$, where $m$ is a natural number.
\end{corollary}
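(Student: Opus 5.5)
Corollary \ref{finprimel} is essentially a restatement of Theorem \ref{multfield}, so the plan is simply to unpack what it means for the multiplicative group $\Bbb F^*_q$ to be cyclic.

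By Theorem \ref{multfield}, the group $\Bbb F^*_q$ of all nonzero elements of $\Bbb F_q$ is cyclic of order $q-1$. Recall how that theorem is obtained. The nonzero elements of $\Bbb F_q$ are exactly the roots of $x^{q-1}-1$, so $\Bbb F^*_q$ coincides with the group $K^*_{q-1}$ of roots of unity of order $q-1$ for $K=\Bbb F_q$. The theorem on roots of unity in Section \ref{secthre} then says this group is cyclic.

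I will take $a$ to be a generator of this cyclic group. By definition, every element $b\in\Bbb F^*_q$ equals $a^m$ for some integer $m$, which a priori may be zero or negative. The only remaining point is that $m$ can be taken to be a natural number. Since $a^{q-1}=1$, we may replace $m$ by $m+t(q-1)$ for a suitable integer $t$ and make it positive. For instance, $b=1$ is $a^{q-1}$.

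There is no real obstacle here. The only care needed is the bookkeeping about the exponent lying in $\Bbb N$ rather than in $\Bbb Z$, and noting that the statement concerns only nonzero $b$, since $0$ is never a power of $a\neq 0$. I would also remark that this is the version of the Primitive Element Theorem mentioned at the start of the section. Because every nonzero element of $\Bbb F_q$ is a power of $a$, we get $\Bbb F_q=\Bbb Z_p(a)$ for any subfield, in particular over $\Bbb Z_p$, so $a$ is a primitive element of the extension.
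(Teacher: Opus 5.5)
Your proposal is correct and matches the paper's proof, which simply takes $a$ to be a generator of the cyclic group $\Bbb F^*_q$ given by Theorem \ref{multfield}. Your extra step of shifting the exponent by a multiple of $q-1$ so that it is a natural number fills in a detail the paper leaves implicit.
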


\begin{proof} As $a$ one can take a generator of the cyclic group $\Bbb F^*_q$.
\end{proof}

\begin{remark}
An element $a\in E$ is called a {\it primitive element} for a finite field extension $K\subset E$ if $E$ can be obtained by adjoining $a$ to the field $K$. The above  corollary implies that a finite extension of a finite field always has  a primitive element. One can show that
for any separable finite field extension $K\subset E$ of an arbitrary field $K$ there exists  a primitive element  (see  Primitive Element Theorem in section \ref{seceight}).
  \end{remark}

If $K\subset E$ is a finite field extension, then $E$ can be obtained by adjoining to $K$ a finite set $\{m_1,\dots, m_n\}$ of algebraic elements over $K$. By Primitive Element Theorem, instead of adjoining  to $K$ finitely many elements one can adjoin just one element $s$. Below we will show that for finite fields such a result follows from  properties of the greatest common divisor of a collection of elements on $\Bbb Z$.

Recall that for any collection $m_1,\dots,m_n\in \Bbb Z$ of nonzero natural numbers their greatest  common divisor $s$ is representable in the form  $s=k_1m_1+\dots+k_nm_n$  where  $k_1,\dots,k_n\in \Bbb Z$, and for any $1\leq i\leq n$ the number $m_i$ is representable in the form  $m_i=d_i s$, where $d_i\in \Bbb Z$. One can apply this result to a cyclic group $G$, since $G$ us a  factor group  $G$ of $\Bbb  Z$. In the next lemma  we apply it  to a cyclic  group $G$ in which  the group operation is written in the multiplicative form.

\begin{lemma}\label{gcd}  For any collection $m_1,\dots,m_n\in G$ of element in $G$ there exist an $n$ tuple of integers $k_1,\dots,k_n\in \Bbb Z$ such that the element  $s=m_1^{k_1}\cdot \ldots \cdot m_n^{k_n}$ has the following property: for any $1\leq i\leq n$ the element  $m_i\in G$ is representable in the form  $m_i=s^{q_i}$ for some $q_i\in \Bbb Z$.
\end{lemma}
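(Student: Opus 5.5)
Since $G$ is cyclic, we pick a generator $g\in G$, so every element is a power of $g$. If $G$ is infinite it is isomorphic to $\Bbb Z$; if it has order $N$ it is isomorphic to $\Bbb Z/N\Bbb Z$. For each $i$ we choose an integer exponent $a_i$ with $m_i=g^{a_i}$. This turns the multiplicative statement into the additive statement about integers recalled just before the lemma. That statement concerns nonzero integers, so we first dispose of the degenerate case.

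If $m_i$ is the identity element we may take $a_i=0$, and then $m_i=s^{0}$ for any $s$. Elements equal to the identity therefore need no attention: we set their $k_i=0$ and $q_i=0$. If all $m_i$ are the identity, we take all $k_i=0$, so $s$ is the identity and $q_i=0$. Otherwise let $I$ be the set of indices with $a_i\neq 0$; when $G$ is finite we may also choose representatives $0<a_i<N$. Let $d$ be the greatest common divisor of the $a_i$ with $i\in I$, which we may replace by their absolute values.

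By the recalled property of the greatest common divisor, there are integers $k_i$ ($i\in I$) with $d=\sum_{i\in I}k_ia_i$, and integers $q_i$ with $a_i=q_id$. We put $k_i=0$ for $i\notin I$ and set
$$s=m_1^{k_1}\cdot\ldots\cdot m_n^{k_n}.$$
Because $G$ is commutative, $s=g^{\sum k_ia_i}=g^{d}$. Hence for every $i\in I$ we get $s^{q_i}=g^{dq_i}=g^{a_i}=m_i$, and for $i\notin I$ the identity $m_i=s^0$ holds trivially. This proves the lemma.

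The only real obstacle is bookkeeping: the exponents $a_i$ are determined only modulo $N$ in the finite case. This causes no trouble, because all identities are verified in $G$ itself. Any choice of representatives works, as long as we use the same $a_i$ both in computing $d$ and in the relations $a_i=q_id$. Commutativity of $G$, which holds for any cyclic group, is what allows us to combine the powers of $g$ into $g^{\sum k_ia_i}$.
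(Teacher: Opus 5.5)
Your proof is correct and follows the route the paper indicates. The paper regards the cyclic group $G$ as a quotient of $\Bbb Z$ and transfers the B\'ezout/gcd property of integers; you do the same by lifting each $m_i$ to an exponent $a_i$ of a generator $g$, taking $d=\gcd(a_i)=\sum k_ia_i$, and setting $s=g^d$. Your separate treatment of identity elements (zero exponents) is a careful addition, since the paper's recalled gcd fact is stated only for nonzero integers.
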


\begin{theorem} Let $\Bbb Z_p(m_1,\dots,m_n)$ be a subfield in $\Bbb F_q$ for $q=p^k$  generated over $\Bbb Z_p\subset \Bbb F_q$ by elements $m_1,\dots, m_n\in \Bbb F_q$. Then the field $\Bbb Z_p(m_1,\dots,m_n)$ can be  generated over $\Bbb Z_p$ by one element having a form $m_1^{k_1}\cdot \ldots \cdot m_n^{k_n}$.
\end{theorem}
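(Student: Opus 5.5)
The plan is to apply Lemma \ref{gcd} inside the multiplicative group $\Bbb F_q^*$, which is cyclic by Theorem \ref{multfield}. First I dispose of zero elements. If some $m_i=0$, it contributes nothing to the field generated over $\Bbb Z_p$, so it can be dropped from the list; its exponent is taken to be $k_i=0$, with the convention $0^0=1$. If all $m_i$ are zero, the field is $\Bbb Z_p$ itself, generated by $1=m_1^0\cdots m_n^0$. So I may assume all $m_1,\dots,m_n$ lie in $\Bbb F_q^*$.

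Next, Lemma \ref{gcd} applied to the cyclic group $G=\Bbb F_q^*$ gives integers $k_1,\dots,k_n$ such that $s=m_1^{k_1}\cdot\ldots\cdot m_n^{k_n}$ satisfies $m_i=s^{q_i}$ for suitable integers $q_i$. I then show $\Bbb Z_p(s)=\Bbb Z_p(m_1,\dots,m_n)$ by two inclusions.

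\begin{itemize}
\item Since each $m_i$ is nonzero, $m_i^{-1}$ lies in the field $\Bbb Z_p(m_1,\dots,m_n)$. Hence every product of integer powers of the $m_i$, in particular $s$, lies in this field, so $\Bbb Z_p(s)\subset\Bbb Z_p(m_1,\dots,m_n)$.
\item Conversely, $s\neq 0$ and each $m_i=s^{q_i}$ (with $q_i$ possibly negative) lies in $\Bbb Z_p(s)$. Hence $\Bbb Z_p(m_1,\dots,m_n)\subset \Bbb Z_p(s)$.
\end{itemize}

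Finally, if the proof of Lemma \ref{gcd} is also required, I would choose a generator $g$ of $G$ and write $m_i=g^{a_i}$. Let $d=\gcd(a_1,\dots,a_n,|G|)$ and write $d=\sum k_ia_i+t|G|$ by the gcd property recalled before the lemma. Then $s=\prod m_i^{k_i}=g^{d}$, and $m_i=g^{a_i}=s^{a_i/d}$.

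The only delicate point is handling negative exponents and zero elements. Negative exponents are harmless because we are working inside fields, so inverses of nonzero elements stay in the generated subfield. Zero elements are handled by the reduction in the first step.
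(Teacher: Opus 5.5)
Your proposal is correct and follows the paper's proof, which simply notes that $\Bbb F_q^*$ is cyclic and invokes Lemma \ref{gcd}. Your handling of zero generators, the explicit two-inclusion check that $\Bbb Z_p(s)=\Bbb Z_p(m_1,\dots,m_n)$, and your B\'ezout-based proof of Lemma \ref{gcd} fill in details the paper leaves implicit; including $|G|$ in the gcd neatly covers the case $a_i=0$.
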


\begin{proof} The multiplicative group of the field  $\Bbb F_q$ is cyclic. Thus, the theorem follows from Lemma \ref{gcd}.
\end{proof}

Which subfields are contained in a finite field $\Bbb F_{p^n}$? The following lemma gives an answer to this question.

\begin{lemma}\label{subfield} The field $\Bbb F_{p^n}$ contains the subfield $\Bbb F_{p^k}$ if and only if $k$ is a divisor of $n$.
\end{lemma}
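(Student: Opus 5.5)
We prove the two directions separately, using the Degree formula for necessity and the description of $\Bbb F_{p^k}$ as the root set of $x^{p^k}-x$ (item 2 of Section \ref{prelim}) for sufficiency.

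\emph{Necessity.} Suppose $\Bbb F_{p^k}\subset \Bbb F_{p^n}$. Consider the chain $\Bbb Z_p\subset \Bbb F_{p^k}\subset \Bbb F_{p^n}$. By item 1 of Section \ref{prelim}, the degree of $\Bbb Z_p\subset \Bbb F_{p^k}$ is $k$ and the degree of $\Bbb Z_p\subset \Bbb F_{p^n}$ is $n$. The Degree formula then gives $n=k\cdot d$, where $d$ is the degree of $\Bbb F_{p^k}\subset \Bbb F_{p^n}$, so $k$ divides $n$.

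An alternative argument counts elements instead. The field $\Bbb F_{p^n}$ is a vector space over $\Bbb F_{p^k}$ of some dimension $d$, so $p^n=(p^k)^d$ and again $n=kd$.

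\emph{Sufficiency.} Suppose $n=kd$. By item 2 of Section \ref{prelim}, it suffices to show that the polynomial $x^{p^k}-x$ has exactly $p^k$ roots in $\Bbb F_{p^n}$.

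\begin{enumerate}
\item By Theorem \ref{multfield}, the group $\Bbb F_{p^n}^*$ is cyclic of order $p^n-1$.
\item Since $k\mid n$, the number $p^k-1$ divides $p^n-1=(p^k)^d-1$. This follows from the factorization $y^d-1=(y-1)(y^{d-1}+\dots+1)$ with $y=p^k$.
\item By Lemma \ref{cyclic1}, the cyclic group $\Bbb F_{p^n}^*$ therefore contains a (unique) subgroup of order $p^k-1$. Every element $a$ of this subgroup satisfies $a^{p^k-1}=1$.
\item Hence $x^{p^k-1}-1$ has exactly $p^k-1$ roots in $\Bbb F_{p^n}$; it cannot have more, since its degree is $p^k-1$.
\item Adding the root $0$, the polynomial $x^{p^k}-x$ has exactly $p^k$ roots in $\Bbb F_{p^n}$.
\end{enumerate}

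Item 2 of Section \ref{prelim} now gives $\Bbb F_{p^k}\subset \Bbb F_{p^n}$, realized as the set of these roots.

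Another route to sufficiency uses the Frobenius map. Every $a\in\Bbb F_{p^n}$ satisfies $a^{p^n}=a$, and $\Phi^k$ is a homomorphism, so the fixed points of $\Phi^k$ form a subfield. Showing that this subfield has exactly $p^k$ elements, however, still requires the divisibility and cyclicity argument above.

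The main obstacle is the divisibility step $p^k-1\mid p^n-1$ and its translation, via cyclicity of $\Bbb F_{p^n}^*$, into the existence of exactly $p^k$ roots. The rest is bookkeeping with the Degree formula and the preliminaries.
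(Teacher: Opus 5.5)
Your proof is correct and essentially combines the paper's two proofs. Necessity is the Degree-formula (equivalently, element-counting) argument behind the second proof via Theorem \ref{findielex}; it replaces the first proof's use of Lagrange's theorem together with the fact that $p^k-1\mid p^n-1$ forces $k\mid n$. Sufficiency is the first proof's cyclic-subgroup construction, with the easy divisibility $p^k-1\mid p^n-1$ and the count of \emph{exactly} $p^k$ roots (needed for item 2) made explicit.

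One small correction to your closing aside: the Frobenius route needs that divisibility but not cyclicity. Indeed, $k\mid n$ gives $x^{p^k}-x\mid x^{p^n}-x=\prod_{a\in\Bbb F_{p^n}}(x-a)$, so $x^{p^k}-x$ already has $p^k$ distinct roots in $\Bbb F_{p^n}$.
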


In the next section, we  prove  Lemma \ref{subfield} using the Degree formula.
Below we prove  Lemma \ref{subfield} using only properties of cyclic groups.
We will need the following lemma.

\begin{lemma} \label{divisibility}Let $a, t,s\in \Bbb Z$ be integers such that $a>1, s>0, t\geq 0$.  Then the number $a^t-1$ is divisible by the number $a^s-1$ if and only if $t=ks$ for some nonnegative $k$.
\end{lemma}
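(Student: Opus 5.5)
The plan is to use the division algorithm for $t$ by $s$ and reduce the question to a remainder. First the easy direction. If $t=ks$ with $k\geq 0$, put $b=a^s$. Then $a^t-1=b^k-1=(b-1)(b^{k-1}+b^{k-2}+\dots+b+1)$, which is divisible by $b-1=a^s-1$. For $k=0$ the number $a^t-1=0$ is trivially divisible.

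For the converse, assume $a^s-1$ divides $a^t-1$. Write $t=ks+r$ with $k\geq 0$ and $0\leq r<s$. The identity
$$a^t-1=a^r\,(a^{ks}-1)+(a^r-1)$$
together with the first part shows that $a^{ks}-1$, and hence $a^t-1-(a^r-1)$, is divisible by $a^s-1$. So $a^s-1$ divides $a^r-1$. Since $a>1$ and $0\leq r<s$, we have $0\leq a^r-1<a^s-1$. A nonnegative integer smaller than $a^s-1$ that is divisible by it must be $0$. Therefore $a^r=1$, and because $a>1$ this forces $r=0$, i.e. $t=ks$.

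The only delicate point is the size comparison $0\leq a^r-1<a^s-1$. It uses $a>1$ for strict monotonicity of $r\mapsto a^r$, and $s>0$ so that $a^s-1\geq 1$ and the divisibility statement is meaningful. Everything else is the factorization of $b^k-1$.

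Alternatively, one can phrase the argument in the language of cyclic groups, which fits the surrounding section. Work in $U(a^s-1)$ when $a^s-1>1$. There the element $a$ has order exactly $s$, because $a^j-1<a^s-1$ for $0<j<s$. The condition $a^t\equiv 1 \bmod (a^s-1)$ then holds exactly when $s$ divides $t$. The case $a^s-1=1$, i.e. $a=2$, $s=1$, is trivial, since then every $t$ is a multiple of $s$. I would present the elementary division-algorithm version above as the main proof.
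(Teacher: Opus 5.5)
Your proof is correct and is essentially the paper's argument: both reduce $t$ modulo $s$ to a remainder $r$, deduce that $a^s-1$ divides $a^r-1$, and conclude $r=0$ from $0\le a^r-1<a^s-1$. The differences are cosmetic. You do the reduction in one step via $a^t-1=a^r(a^{ks}-1)+(a^r-1)$ instead of iterating $a^t-1\equiv a^{t-s}-1 \pmod{a^s-1}$, and you also prove the ``if'' direction explicitly, which the paper leaves implicit.
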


\begin{proof} Let us  show first that if $t\geq s$, then the  numbers $a^t-1$ and $-(a^{t-s}-1)$ are equal modulo $a^s-1$. Indeed, the following identity holds: $(a^{t-s}-1) a^s +a^s-1=a^t-1$. Reducing this identity modulo $a^s-1$, we obtain the needed equality  $(a^{t-s}-1) (-1)=a^t-1 \mod (a^s-1)$.
So if $a^t-1$ is divisible by $a^s-1$ and $t\geq s$, then $a^{t-s}-1$ also is divisible by $a^s-1$. Let us divide $n$ by $s$ with a remainder $n=ks +b$, where $0\leq b<s$. Applying the above statement $k-1$ times, we obtain that if $a^t-1$ is divisible by $a^s-1$, then $a^b-1$ also is divisible by $a^s-1$. But $a^b-1$ for $0\leq b<s$ could divide $a^s-1$ only if $b=0$. Lemma is proven.
\end{proof}

\begin{proof}[Proof of Lemma \ref{subfield} via cyclic group] If $\Bbb  F_{p^n}$ contains $\Bbb F_{p^k}$, then the multiplicative group $F^*_{p^n}$ of field  $\Bbb  F_{p^n}$ contains the multiplicative group of $\Bbb F_{p^k}$. These groups are  cyclic groups of orders $p^n-1$ and $p^k-1$.
Thus, if $\Bbb F^8_{p^k}\subset \Bbb F_{p^n}$ the number $p^k-1$ divides the number $p^n-1$. By Lemma \ref{divisibility}, it implies that $k$ divides $n$. In the opposite direction, if $k$ divides $n$, then the multiplicative group $\Bbb F^*_{p^n}$ contains a cyclic  subgroup   $G$ of order $p^k-1$. Each element $x$  of the group $G$ satisfies the relation $x^{p^k-1}=1$. The set of all elements of $G$ together with the element zero provides $p^k$ solutions of the equation $x^{p^k}-x=0$. These elements form a subfield $\Bbb F_{p^k}$ in the field $\Bbb F_{p^n}$.
\end{proof}

\section{Finite extensions of finite fields and their degrees}\label{secfour}

In this section, we explore   finite extensions $K\subset F$ of a finite field $K$ of  characteristic $p$. We show that there is a unique such extension  of any given degree $n$ and describe all intermediate fields for that extension.

 First of all, any finite field $F$ of characteristic $p$ contains its finite  subfield $\Bbb Z_p$. The degree $n$ of the  extension $\Bbb Z_p\subset F$  completely determines it: the field $F$ is the splitting field of the polynomial $x^q-x$, where $q=p^n$ over the field $\Bbb Z_p$. The extension $\Bbb Z_p\subset F$ is unique in the following sense: if $\Bbb Z_p\subset F_1$ is another splitting field of the polynomial
$x^q-x$, then there is an isomorphism $\pi:F\to F_1$ whose restriction to the field $\Bbb Z_p$ is the identity map.

Similar result holds for an extension of degree $n$ of the finite field $\Bbb F_q$, where $q=p^s$.

\begin{theorem}\label{findielex}  There is a unique extension $\Bbb F_q\subset F$ of degree $n$. The field $F$ is a splitting field of the polynomial  $x^{q^n}-x$ over the field $\Bbb F_q$: if  $\Bbb F_q \subset F_1$ is another splitting field of the polynomial
$x^{q^n}-x$, then there is an isomorphism $\pi:F\to F_1$ whose restriction to the field $\Bbb F_q$ is the identity map. The field $F$ contains $q^n$ elements, and it is the finite field $\Bbb F_{p^{sn}}$.
\end{theorem}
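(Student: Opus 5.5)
The plan is to reduce everything to the preliminary facts of Section~\ref{prelim} about the Galois fields $\Bbb F_{p^m}$, together with the Degree formula.

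\textbf{Size and identification of $F$.} Let $\Bbb F_q\subset F$ be any extension of degree $n$, where $q=p^s$. Then $F$ is an $n$-dimensional vector space over $\Bbb F_q$, so it contains $q^n=p^{sn}$ elements. By the Degree formula applied to the chain $\Bbb Z_p\subset\Bbb F_q\subset F$, the degree of $\Bbb Z_p\subset F$ is $sn$. By item 1 of Section~\ref{prelim}, this means $F$ is the Galois field $\Bbb F_{p^{sn}}$.

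\textbf{$F$ is a splitting field.} By item 2 of Section~\ref{prelim}, applied with $q^n=p^{sn}$, the polynomial $x^{q^n}-x$ has exactly $q^n$ roots in $F$, and these roots are all the elements of $F$. So $x^{q^n}-x$ splits completely into linear factors over $F$. No proper subfield of $F$ containing $\Bbb F_q$ can contain all of these roots, because the roots exhaust $F$. Hence $F$ is a splitting field of $x^{q^n}-x$ over $\Bbb F_q$.

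\textbf{Existence and uniqueness.} For existence, take a splitting field $F$ of $x^{q^n}-x$ over $\Bbb F_q$. Its characteristic is $p$, and the derivative of $x^{q^n}-x$ is $-1$, so the polynomial has no repeated roots. Its $q^n$ distinct roots form a subfield: they are closed under sums and products because the Frobenius homomorphism $\Phi$ (item 3) is a ring homomorphism, and they are closed under inverses. This subfield contains $\Bbb F_q$, since every $a\in\Bbb F_q$ satisfies $a^q=a$ and therefore $a^{q^n}=a$. Since $F$ is generated over $\Bbb F_q$ by these roots, $F$ equals the set of roots. Thus $|F|=q^n$, and the degree of $\Bbb F_q\subset F$ is $n$.

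For uniqueness, let $F$ and $F_1$ be any two extensions of degree $n$. By the previous step, both are splitting fields of the same polynomial $x^{q^n}-x$ over $\Bbb F_q$. The uniqueness of splitting fields (which we use from class, as in the paragraph preceding the theorem) then gives an isomorphism $\pi:F\to F_1$ whose restriction to $\Bbb F_q$ is the identity.

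\textbf{Main obstacle.} The only subtle step is making sure the uniqueness statement is \emph{relative to} $\Bbb F_q$. An abstract isomorphism $\Bbb F_{p^{sn}}\cong\Bbb F_{p^{sn}}$ need not fix $\Bbb F_q$ pointwise. To get an isomorphism that does, I will invoke the extension-of-isomorphisms form of the splitting field theorem, starting from the identity map on $\Bbb F_q$, rather than the bare classification of finite fields by their order.
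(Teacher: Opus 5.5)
Your proof is correct and follows the same route as the paper. The Degree formula identifies $F$ with $\Bbb F_{p^{sn}}$. Every element of $F$, in particular every element of $\Bbb F_q$, is a root of $x^{q^n}-x$, so $F$ is a splitting field of this polynomial over $\Bbb F_q$, and uniqueness comes from the uniqueness of splitting fields. Beyond what the paper writes, you also construct the extension explicitly (distinct roots via the derivative, closure of the root set under the field operations) and make sure the isomorphism fixes $\Bbb F_q$ pointwise; both points are left implicit in the paper's proof.
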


\begin{proof} Consider the extensions $\Bbb Z_p\subset \Bbb F_q\subset F$. The extension $\Bbb Z_p\subset \Bbb F_q$ has degree $s$ (since $q=p^s$).
By assumption, the extension $\Bbb F_q\subset F$ has degree $n$. Thus, by Degree formula the extension $\Bbb Z_p\subset F$ has degree $sn$, so  $F$ is the field  $\Bbb F_{p^{sn}}$.
In the other words, $F=\Bbb F_{q^n}$, $F$ contains $q^n$ elements, and it is a splitting field of  the polynomial $x^{q^n} -x$ over the field $\Bbb Z_p$. Each element  $a\in \Bbb  F_q$ is a root of the polynomial $x^{q^n}-x=0$.
Indeed, for any element $a\in \Bbb F_q$ the identity $a^q=a$ holds. It implies that $(a^q)^q=a^q=a$. In the similar way, it implies  that $a^{q^n}= a^{q^{n-1}}=\dots =a^q=a$. The splitting field of the polynomial $x^{q^m}-x$ over the field $\Bbb F_q$ is equal to $\Bbb F_{q^m}$, since all elements of the field $\Bbb F_q$ are roots of that polynomial.
\end{proof}

As a corollary of Theorem \ref {findielex}, one can prove another proof of Lemma~\ref{subfield}.

\begin{proof}[Second proof  of Lemma \ref{subfield}]
The field $\Bbb F_{p^n}$ contains the subfield $\Bbb F_{p^k}$. Then the field $\Bbb F_{p^n}$ can be considered as an extension on the field $\Bbb F_{p^k}$. But all such extensions all described in Theorem \ref{findielex}. This theorem implies that $k$ is a divisor of $n$. In the opposite direction, if $k$ is a divisor of $n$, then the the field $\Bbb F_{p^n}$ obtains the field $\Bbb F_{p^k}$ follows from Theorem \ref{findielex} (or could be easily proven separately).
\end{proof}

As another  corollary of Theorem \ref {findielex}, one can
completely describe all extensions $\Bbb F_q\subset B$ which are contained in a field $F$, where $\Bbb F_q\subset F$ is an extension of degree $n$.

\begin{theorem}\label{cordeq} Let $\Bbb F_q\subset F$ be an extension of degree $n$ of the finite field $\Bbb F_q$, where $q=p^s$.
 Then for any divisor $k$ of the degree $n$, i.e., $n=km$ for some $m\in \Bbb z$,  there is a unique  intermediate field $B$
 such that the extension $\Bbb F_q\subset B$ has degree $k$.
 \end{theorem}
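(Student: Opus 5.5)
We plan to prove Theorem \ref{cordeq} by identifying $F$ concretely and then describing intermediate fields as root sets of polynomials $x^{q^k}-x$, which gives both existence and uniqueness.

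\textbf{Step 1: identify $F$.} By Theorem \ref{findielex}, $F$ is the field $\Bbb F_{q^n}=\Bbb F_{p^{sn}}$. Moreover, $F$ is exactly the set of roots of $x^{q^n}-x$, and it contains $\Bbb F_q$ as the set of roots of $x^q-x$.

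\textbf{Step 2: existence.} Let $k$ divide $n$. Then $sk$ divides $sn$, so by Lemma \ref{subfield} the field $\Bbb F_{p^{sn}}$ contains a subfield $B\cong\Bbb F_{p^{sk}}=\Bbb F_{q^k}$. By item 2 of the preliminaries, $B$ is the set of roots in $F$ of $x^{q^k}-x$.

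We must check that $\Bbb F_q\subset B$. Take $a\in\Bbb F_q$, so $a^q=a$. Iterating as in the proof of Theorem \ref{findielex} gives $a^{q^k}=a$. Hence $a$ is a root of $x^{q^k}-x$, and so $a\in B$.

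Finally, by the Degree formula applied to $\Bbb Z_p\subset\Bbb F_q\subset B$, we get $sk=s\cdot[B:\Bbb F_q]$. Therefore $[B:\Bbb F_q]=k$.

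\textbf{Step 3: uniqueness.} Suppose $B'$ is any intermediate field with $[B':\Bbb F_q]=k$. By the Degree formula, $[B':\Bbb Z_p]=sk$, so $B'$ has $q^k$ elements. Its multiplicative group has order $q^k-1$, so every element of $B'$ satisfies $x^{q^k}=x$. Thus $B'$ is contained in the root set of $x^{q^k}-x$ in $F$.

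That root set has at most $q^k$ elements, and it equals $B$. Since $|B'|=q^k=|B|$, we conclude $B'=B$.

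\textbf{Remarks on the argument.} The main point needing care is uniqueness. The naming ``$\Bbb F_{p^{sk}}$'' only fixes a field up to isomorphism, so two abstractly isomorphic subfields of $F$ must be shown to coincide as subsets. We resolve this by characterizing any subfield with $q^k$ elements as the full root set of $x^{q^k}-x$ in $F$, using the bound on the number of roots of a polynomial.

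Optionally, we could also remark that no intermediate field has degree $k\nmid n$. This follows from the Degree formula, since $n=[F:B]\cdot[B:\Bbb F_q]$.
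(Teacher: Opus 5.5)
Your proof is correct, but it takes a different route from the paper's. The paper argues abstractly. Using Theorem~\ref{findielex} twice, it builds a tower $\Bbb F_q\subset B\subset F_1$ of degrees $k$ and $m$ and observes that $F_1$ then has degree $n$ over $\Bbb F_q$. It then invokes uniqueness of degree-$n$ extensions up to an isomorphism fixing $\Bbb F_q$ to transport $B$ into $F$; the Degree formula also shows that only divisors of $n$ occur as degrees. That argument needs nothing beyond the classification of extensions, but it yields only existence. The ``uniqueness'' it appeals to is uniqueness up to isomorphism, so it never shows that two intermediate fields of degree $k$ coincide as subsets of $F$.

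You instead work concretely inside $F$:
\begin{enumerate}
\item Lemma~\ref{subfield} with $sk\mid sn$ gives a subfield with $q^k$ elements.
\item The iteration $a^{q^k}=a$ puts $\Bbb F_q$ inside it.
\item The Degree formula gives degree $k$.
\item Root counting for $x^{q^k}-x$ forces every intermediate field of degree $k$ to equal that root set.
\end{enumerate}
This gives genuine uniqueness inside $F$, which is exactly the issue you flagged and which the paper's proof leaves unaddressed. It also gives the explicit description $B=\{x\in F: x^{q^k}=x\}$, i.e.\ the fixed field of $\Phi^{sk}$, which is the form in which intermediate fields are used in Section~\ref{seven}.
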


 \begin{proof} Theorem \ref{cordeq} in one direction follows from the Degree formula.  Indeed, let $B$ be an intermediate  field, i.e., the inclusions $\Bbb F_q\subset B \subset F$ holds. Assume that the degrees of extension $\Bbb F_q\subset B$ and $B\subset F$ are $k$ and $m$ respectively. By assumption  of the theorem, the degree of extension $\Bbb F\subset F$ is equal to $n$. The Degree formula implies that $n=km$.

 Theorem in the opposite direction follows from and Theorem \ref{findielex}. Indeed, by Theorem \ref{findielex},  there is a unique (up to a corresponding isomorphism) extension $B$ of degree $k$ of the field  $\Bbb F_q$. By the same  by Theorem \ref{findielex}, there is a unique (up to a corresponding isomorphism) extension $F_1$ of degree $m$ of the field $B$. The constructed extension $\Bbb F_q \subset F_1$, by theorem \ref{findielex}, is isomorphic to the extension $\Bbb F_q\subset F$. So this extension has an intermediate field with needed properties.
  \end{proof}

\section{Automorphisms of finite fields}\label{five}

In this section, we study automorphisms of finite fields. In particular, we show that any automorphism of a finite field $F$ of characteristic $p$ is a power of the Frobinius map $\Phi:F\to F$.

Let $F$ be a field of characteristic $p$. Recall that the Frobenius map  $\Phi:F\to F$ is the map which  sends an element $a\in F$ to the element $\Phi(a)=a^p$. The map $\Phi $ is a field homomorphism, i.e., it is a one-to-one map which respects arithmetic operations. In general, the map $\Phi$  is not an onto map.

\begin{theorem}\label{auto}  Let $F$ be the finite field $\Bbb F_{p^n}$.  Then:
\begin{enumerate}
\item the Frobenius map $\Phi:\Bbb F_{p^n}\to \Bbb F_{p^n}$ is an automorphism  of $\Bbb F_{p^n}$;
\item  an element $a\in  \Bbb F_{p^n}$ is fixed under   the map $\Phi^k$ if and only if $a$ is a root of the polynomial  $x^{p^k}-x$. In particular, the map $\Phi^k$ fixes at most $p^k$  elements of the field $\Bbb F_{p^n}$;

\item the map $\Phi^k$ fixes exactly  $p^k$ elements of the field $\Bbb F_{p^n}$ if and only if $k$ is a divisor of $n$. Moreover, if $k$ is a divisor of $n$, then the set of fixed elements  of the map $\Phi^k$ is the set of elements in the  subfield $\Bbb  F_{p^k}$.

\end{enumerate}
\end{theorem}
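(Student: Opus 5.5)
I will prove the three parts in order, relying on the preliminaries of Section~\ref{prelim} and on Lemma~\ref{subfield}.

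\textbf{Part 1.} The Frobenius map $\Phi$ is a field homomorphism. Its additivity $(a+b)^p=a^p+b^p$ holds because the binomial coefficients $\binom{p}{i}$ with $0<i<p$ are divisible by $p$. A field homomorphism has trivial kernel, so $\Phi$ is injective. An injective map of the finite set $\Bbb F_{p^n}$ to itself is onto, which is also recorded as item~3 of Section~\ref{prelim}. Hence $\Phi$ is an automorphism.

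\textbf{Part 2.} By induction on $k$ we get $\Phi^k(a)=a^{p^k}$. So $\Phi^k(a)=a$ if and only if $a$ is a root of $x^{p^k}-x$. This polynomial is nonzero of degree $p^k$, so it has at most $p^k$ roots in the field $\Bbb F_{p^n}$. Therefore $\Phi^k$ fixes at most $p^k$ elements.

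\textbf{Part 3, sufficiency.} Suppose $k$ divides $n$. By Lemma~\ref{subfield}, $\Bbb F_{p^n}$ contains a subfield $\Bbb F_{p^k}$. By item~2 of Section~\ref{prelim}, the elements of this subfield are exactly the roots of $x^{p^k}-x$ in $\Bbb F_{p^n}$, and there are exactly $p^k$ of them. By Part~2 these roots are exactly the fixed points of $\Phi^k$. So $\Phi^k$ fixes exactly $p^k$ elements, and its fixed set is $\Bbb F_{p^k}$.

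\textbf{Part 3, necessity.} Suppose $\Phi^k$ fixes exactly $p^k$ elements. Then $x^{p^k}-x$ has exactly $p^k$ roots in $\Bbb F_{p^n}$. By item~2 of Section~\ref{prelim}, $\Bbb F_{p^n}$ then contains the subfield $\Bbb F_{p^k}$. Lemma~\ref{subfield} now gives that $k$ divides $n$.

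Alternatively, the fixed points of a field automorphism always form a subfield, so this fixed set is a field with $p^k$ elements inside $\Bbb F_{p^n}$. The Degree formula, or Lemma~\ref{divisibility} applied to the multiplicative groups, then forces $k\mid n$.

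\textbf{Main obstacle.} There is no real difficulty. The only step needing care is the necessity direction of Part~3, where one must correctly invoke item~2 of the preliminaries to turn ``exactly $p^k$ roots'' into ``contains a subfield $\Bbb F_{p^k}$.''
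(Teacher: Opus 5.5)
Your proposal is correct and follows the paper's proof. Part 1 uses injectivity on a finite set, Part 2 uses $\Phi^k(a)=a^{p^k}$, and Part 3 combines the root description of $\Bbb F_{p^k}$ (item 2 of the preliminaries) with Lemma~\ref{subfield}, which is exactly what the paper's one-line claim ``the splitting field of $x^{p^k}-x$ lies in $\Bbb F_{p^n}$ if and only if $k\mid n$'' amounts to (with $k\geq 1$ tacitly assumed, since for $k=0$ the polynomial $x^{p^0}-x$ is zero).
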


\begin{proof} 1) The map $\Phi:\Bbb F_{p^n}\to \Bbb F_{p^n}$ is an onto map, since it is one-to-one map and $\Bbb F_{p^n}$ is a finite set. Thus, $\Phi$ is an automorphism of the field $\Bbb F_{p^n}$.

2) Follows from an identity $\Phi^k(a)\equiv a^{p^k}$.

3)  A splitting field of the polynomial $x^{p^k}-x$ is contained in the field  $\Bbb F_{p^n}$ if and only if $k$ is a divisor of $n$. If $k$ is a divisor of $n$, then the set of all roots in $\Bbb F_{p^n}$ of the polynomial $x^{p^k}-x$ coincides with the set of elements of the subfield $\Bbb F_{p^k}$.
 \end{proof}

\begin{lemma}\label{bound} Any collection $G$ of automorphisms  of the finite field $\Bbb F_{p^n}$ contains at most $n$ elements.
\end{lemma}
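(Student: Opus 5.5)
The plan is to prove Lemma \ref{bound} via Dedekind's independence of characters: distinct automorphisms of a field, viewed as maps from $F=\Bbb F_{p^n}$ to itself, are linearly independent over $F$. Every automorphism fixes the prime subfield $\Bbb Z_p$ elementwise, since it sends $1$ to $1$. Hence each automorphism is $\Bbb Z_p$-linear, and $F$ is an $n$-dimensional vector space over $\Bbb Z_p$ (item 1 of section \ref{prelim}). Fix a basis $e_1,\dots,e_n$ of $F$ over $\Bbb Z_p$; an automorphism is then determined by the vector $(\sigma(e_1),\dots,\sigma(e_n))\in F^n$. The argument will show that too many automorphisms produce a nontrivial $F$-linear relation among them, which is impossible.

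\textbf{Step 1 (independence).} Let $\sigma_1,\dots,\sigma_m$ be distinct automorphisms of $F$. I will show that $\sum c_i\sigma_i(x)=0$ for all $x\in F$, with $c_i\in F$, forces all $c_i=0$.

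Suppose not, and take a nontrivial relation with the fewest nonzero coefficients. It has at least two nonzero terms, since $c_1\sigma_1(x)=0$ at $x=1$ gives $c_1=0$. Choose $y$ with $\sigma_1(y)\neq\sigma_2(y)$.

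Substitute $xy$ for $x$ and use multiplicativity to get $\sum c_i\sigma_i(y)\sigma_i(x)=0$. Subtract $\sigma_1(y)$ times the original relation. This gives a new relation with coefficients $c_i(\sigma_i(y)-\sigma_1(y))$, which is shorter, since the first coefficient vanishes, and nontrivial, since the second coefficient does not. This contradicts minimality.

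\textbf{Step 2 (counting).} Suppose $G$ contains $m>n$ distinct automorphisms $\sigma_1,\dots,\sigma_m$. Consider the homogeneous linear system over $F$
$$\sum_{i=1}^m c_i\sigma_i(e_j)=0,\qquad j=1,\dots,n.$$
It has $n$ equations and $m>n$ unknowns, so it has a nonzero solution $(c_1,\dots,c_m)\in F^m$.

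For an arbitrary $x=\sum a_je_j$ with $a_j\in\Bbb Z_p$, linearity gives $\sigma_i(x)=\sum a_j\sigma_i(e_j)$. Therefore $\sum_i c_i\sigma_i(x)=\sum_j a_j\sum_i c_i\sigma_i(e_j)=0$ for all $x$. This contradicts Step 1, so $m\le n$.

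\textbf{Main obstacle.} The only real work is Step 1, the minimal-counterexample argument; the rest is linear algebra.

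An alternative route uses only the earlier results of the paper. Every automorphism is determined by its value on a generator $a$ of $F^*$ (Theorem \ref{multfield}). The minimal polynomial of $a$ over $\Bbb Z_p$ has degree $n$, because $\Bbb Z_p(a)=F$. The image $\sigma(a)$ must be a root of this polynomial, and a polynomial of degree $n$ has at most $n$ roots. This gives at most $n$ automorphisms, and is probably the cleaner proof to present.
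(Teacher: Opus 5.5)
Your proof is correct. Your main argument takes a genuinely different route from the paper, while the ``alternative route'' you sketch at the end is exactly the paper's proof.

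The paper's argument runs as follows. It takes a generator $a$ of the cyclic group $\Bbb F^*_{p^n}$ (Theorem \ref{multfield}). The minimal polynomial $P_a$ of $a$ over $\Bbb Z_p$ has degree $n$, because $\Bbb Z_p(a)=\Bbb F_{p^n}$. Every automorphism fixes $\Bbb Z_p$, so it sends $a$ to one of the at most $n$ roots of $P_a$. Finally, an automorphism is determined by $g(a)$, since every nonzero element is a power of $a$.

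That argument is short and uses only tools already developed in the note. It relies on a special feature of finite fields, namely a primitive element that is even a multiplicative generator, so that $g$ is pinned down by multiplicativity alone.

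Your Dedekind--Artin argument uses neither the cyclicity of $\Bbb F^*_{p^n}$ nor any primitive element. Step 1 (independence of distinct automorphisms over $F$) together with the linear-algebra count in Step 2 proves, word for word, a general statement: for any finite extension $K\subset F$ of arbitrary fields, $F$ has at most $[F:K]$ automorphisms fixing $K$ pointwise. To see this, replace your $\Bbb Z_p$-basis by a $K$-basis and use $K$-linearity. So your approach costs one extra lemma but yields the general inequality behind the theorems quoted in Section \ref{seceight}.

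It would also shorten the paper's proof of Theorem \ref{galoiagroup}. Applied with $K=\Bbb F_q$, it bounds the number of automorphisms of $\Bbb F_{q^n}$ fixing $\Bbb F_q$ by $n$. The powers of $\Phi^s$ already give $n$ distinct such automorphisms, so they are all of them.
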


\begin{proof} Let $a\in \Bbb F^*_{p^n}$  be a generator of the cyclic multiplicative group $\Bbb F^*_{p^n}$ of the field $\Bbb F_{p^n}$. Let $P_a\in \Bbb Z_p[x]$ be a minimal polynomial of $a$ over the field $\Bbb Z_p$. Since $a$ generates  the field  $\Bbb F_{p^n}$, the degree of the polynomial $P_a$ is equal to the degree of the field extension $\Bbb Z_p\subset \Bbb F_{p^n}$ which is equal to $n$. Any irreducible polynomial over the field  $\Bbb Z_p$ has simple roots only.  So the polynomial $P_a$ has at most $n$ different roots in the field $\Bbb F_{p^n}$. Any automorphism $g$ of the field $\Bbb F_{p^n}$ fixes pointwise all elements of the subfield $\Bbb Z_p$. Since all coefficients of the polynomial  $P_a$ belong to the field $\Bbb Z_p$, an automorphism $g$ maps the root $a$ of the polynomial $P_a$ to a root  of the same polynomial $P_a$. An automorphism $g$ is totally determined by the element $g(a)$,  since $a$ generates the multiplicative group  $\Bbb F^*_{p^n}$ of the field $\Bbb F_{p^n}$. Indeed,  if  $b\neq 0$ and $b=a^r$, then $g(b)=g^r(a)$, if $b=0$, then $g(b)=0$.
Since $P_a$ has at most $n$ different  roots in the field $\Bbb F_{p^n}$,  the collection of automorphisms  $G$  contains at most $n$ elements.
\end{proof}

\begin{theorem}\label{allauto} Any automorphism of the field $\Bbb F_{p^n}$  is a power $\Phi^k$ of the Frobenius map $\Phi$. All automorphisms of the field $\Bbb F_{p^n}$ form a cyclic group of order $n$ generated by $\Phi$.
\end{theorem}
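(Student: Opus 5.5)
The plan is to combine the upper bound of Lemma \ref{bound} with a lower bound obtained from the powers of the Frobenius map, using Theorem \ref{auto}.

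First, by Theorem \ref{auto} part 1), $\Phi$ is an automorphism of $\Bbb F_{p^n}$, so every power $\Phi^k$ is one too. The cyclic subgroup $\langle\Phi\rangle$ of the automorphism group therefore consists of $\mathrm{id}=\Phi^0,\Phi,\dots,\Phi^{r-1}$, where $r$ is the order of $\Phi$. I will show that $r=n$.

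Second, I check that $\Phi^n$ is the identity. Every element $a\in\Bbb F_{p^n}$ is a root of $x^{p^n}-x$ (item 2 of the preliminaries), so $\Phi^n(a)=a^{p^n}=a$. Hence $r\le n$.

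Third, I check that $\Phi^k$ is not the identity for $1\le k<n$. By Theorem \ref{auto} part 2), $\Phi^k$ fixes at most $p^k<p^n$ elements, so it cannot fix all of $\Bbb F_{p^n}$. Hence $r=n$, and the maps $\Phi^0,\dots,\Phi^{n-1}$ are $n$ pairwise distinct automorphisms. Distinctness follows because $\Phi^i=\Phi^j$ with $0\le i<j<n$ would give $\Phi^{j-i}=\mathrm{id}$, since $\Phi$ is invertible.

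Finally, Lemma \ref{bound} says the set of all automorphisms of $\Bbb F_{p^n}$ has at most $n$ elements. It already contains the $n$ distinct powers $\Phi^k$, so it coincides with $\langle\Phi\rangle$. Thus every automorphism is a power of $\Phi$, and the automorphism group is cyclic of order $n$ generated by $\Phi$.

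There is no serious obstacle here. The only point needing care is the distinctness of the powers: one must use that a fixed-point set of size at most $p^k<p^n$ rules out $\Phi^k=\mathrm{id}$. This is exactly part 2) of Theorem \ref{auto}, which rests on the fact that a nonzero polynomial of degree $p^k$ has at most $p^k$ roots.
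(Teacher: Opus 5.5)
Your proposal is correct and follows the paper's proof: exhibit the $n$ distinct automorphisms $\Phi^0,\dots,\Phi^{n-1}$, use Lemma \ref{bound} to rule out any others, and use $\Phi^n=I$ for cyclicity of order $n$. You also justify the distinctness of these powers through Theorem \ref{auto} part 2), applied only for $1\le k<n$ where $x^{p^k}-x$ is a nonzero polynomial; the paper simply asserts this step.
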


\begin{proof} The set $A= \{I=\Phi^0, \Phi^1,\dots, \Phi^{n-1}\}$ contains  $n$ different automorphisms of the field $\Bbb F_{p^n}$. By Lemma \ref{bound}, all automorphisms of the field $\Bbb F_{p^n}$ belong to the set $A$. All automorphisms of  the field $\Bbb F_{p^n}$ form a cyclic group of order $n$ generated by $\Phi$, since $\Phi^n=I$.
\end{proof}

\section{Galois group of an extension of a finite field}\label{six}

In this section, we define the Galois group $G(F,K)$ of a finite extension $K\subset F$, where $K$ and $F$ are finite fields. We show that the group $G(F,K)$ is well defined, and  describe it.

\begin{definition}\label{normal} A finite  extension $K\subset F$ is {\it normal} if there is a finite group $G(F,K)$ of automorphisms  of the field $F$ such that an element $x\in F$ is fixed under the action of each element $g\in G(F,K)$, i.e., $g(a)=a$, if and only if $a\in K$. For a normal field extension $K\subset F$ the group $G(F,K)$ is called the {\it Galois group} of the extension.
\end{definition}

 One can show that the group $G(F,K)$ is well defined: a group of automorphisms of the field $F$ which fixes the subfield $K$ only, if it exists, is unique. Below, we show that any nested pair $K\subset F$  of finite fields is normal, and  its Galois  group is well defined. Moreover, we will completely describe this  group.

 First of all, Theorem \ref{allauto} implies the following corollary.

 \begin{corollary}\label{aalfirst} For any finite field $\Bbb F_{p^n}$ the field extension $\Bbb Z_p\subset \Bbb F_{^n}$ is normal, and its Galois group   is well defined. Moreover, the Galois group is the cyclic group of order $n$, generated by the Frobenius map $\Phi$.
 \end{corollary}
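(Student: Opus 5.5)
We need to show that $\Bbb Z_p\subset \Bbb F_{p^n}$ is normal, with Galois group the cyclic group of order $n$ generated by $\Phi$, and that this group is well defined. The plan is to take for $G(\Bbb F_{p^n},\Bbb Z_p)$ the full automorphism group of $\Bbb F_{p^n}$. By Theorem \ref{allauto}, this group is $\{I,\Phi,\dots,\Phi^{n-1}\}$, the cyclic group of order $n$ generated by the Frobenius map $\Phi$.

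Next we check the fixed field condition of Definition \ref{normal}. An element $a$ is fixed by every element of this group if and only if $\Phi(a)=a$, since every element is a power of $\Phi$. By Theorem \ref{auto} part 2) with $k=1$, $\Phi(a)=a$ holds if and only if $a$ is a root of $x^p-x$. Since $1$ divides $n$, part 3) says the set of such roots is exactly the subfield $\Bbb F_p=\Bbb Z_p$. So the fixed points are precisely the elements of $\Bbb Z_p$, and the extension is normal with this group as a valid $G(F,K)$.

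Finally, we argue well-definedness: any finite group $G'$ of automorphisms of $\Bbb F_{p^n}$ with fixed field exactly $\Bbb Z_p$ must equal the whole group. By Theorem \ref{allauto}, $G'$ is a subgroup of the cyclic group $\langle\Phi\rangle$ of order $n$. By Lemma \ref{cyclic1}, $G'$ is cyclic, generated by $\Phi^k$ for some divisor $k$ of $n$. Its fixed points are the fixed points of $\Phi^k$, which by Theorem \ref{auto} part 3) form the subfield $\Bbb F_{p^k}$ of $p^k$ elements. This equals $\Bbb Z_p$ only when $k=1$, so $G'=\langle \Phi\rangle$.

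The main point requiring care is this uniqueness step. It hinges on identifying subgroups of $\langle\Phi\rangle$ as $\langle\Phi^k\rangle$ with $k\mid n$, and on counting the fixed points of $\Phi^k$ via Theorem \ref{auto}.
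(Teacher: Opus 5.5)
Your proof is correct and takes the same route as the paper, which derives the corollary directly from Theorem \ref{allauto}. You also write out two steps the paper leaves implicit: the fixed-field check via Theorem \ref{auto}, and the well-definedness argument that any subgroup $\langle\Phi^k\rangle$ with $k\mid n$ and $k>1$ fixes the larger field $\Bbb F_{p^k}$.
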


 Below, we generalize Corollary \ref{aalfirst} for any degree $n$ extension of any  finite  field.

 \begin{theorem}\label{galoiagroup} Let  $\Bbb F_q\subset \Bbb F_{q^n}$ be the degree $n$ extension of the field $\Bbb F_q$, where  $q=p^s$. Then this field extension  is normal, and its Galois group  is well defined. Moreover, the Galois group is the cyclic group of order $n$ generated by the $s$-th power $\Phi^s$ of the Frobenius map $\Phi$.
  \end{theorem}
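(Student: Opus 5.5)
Set $\Psi=\Phi^s$, where $\Phi$ is the Frobenius map of $\Bbb F_{q^n}=\Bbb F_{p^{sn}}$. The plan is to show that the group $H=\{I,\Psi,\dots,\Psi^{n-1}\}$ is exactly the set of automorphisms of $\Bbb F_{q^n}$ fixing $\Bbb F_q$ pointwise, and that its common fixed set is exactly $\Bbb F_q$. That gives normality in the sense of Definition \ref{normal} with $G(\Bbb F_{q^n},\Bbb F_q)=H$.

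First, by Theorem \ref{allauto}, $\Phi$ has order $sn$ in the automorphism group of $\Bbb F_{p^{sn}}$. Hence $\Psi$ has order exactly $n$, and $H$ is a cyclic group of order $n$. Next I compute the fixed points. Since $\Psi^j=\Phi^{sj}$, every element of $H$ fixes the fixed set of $\Psi$. Conversely, anything fixed by all of $H$ is fixed by $\Psi$. So the common fixed set of $H$ equals the fixed set of $\Phi^s$. By Theorem \ref{auto} parts 2) and 3), since $s$ divides $sn$, this fixed set is precisely the subfield $\Bbb F_{p^s}=\Bbb F_q$, i.e.\ the roots of $x^{q}-x$. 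So $H$ fixes exactly $\Bbb F_q$, and the extension is normal.

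For well-definedness, I show that any finite group $G$ of automorphisms of $\Bbb F_{q^n}$ whose common fixed set is exactly $\Bbb F_q$ must equal $H$. By Theorem \ref{allauto}, $G$ is a subgroup of the cyclic group $\langle\Phi\rangle$ of order $sn$. By Lemma \ref{cyclic1}, $G$ is therefore cyclic, generated by $\Phi^d$ for some divisor $d$ of $sn$. The common fixed set of $G$ is then the fixed set of $\Phi^d$, which by Theorem \ref{auto} 3) is $\Bbb F_{p^d}$. Requiring $\Bbb F_{p^d}=\Bbb F_q$ forces $p^d=p^s$ by counting elements, so $d=s$ and $G=\langle\Phi^s\rangle=H$. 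In particular, $H$ is also the full group of automorphisms fixing $\Bbb F_q$ pointwise: any such automorphism generates, together with $H$, a subgroup of $\langle\Phi\rangle$ whose fixed set contains $\Bbb F_q$ and is contained in the fixed set of $H$, which is $\Bbb F_q$. The uniqueness argument just given then shows this subgroup is $H$.

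The main obstacle is this uniqueness step. It relies on identifying subgroups of $\langle\Phi\rangle$ with divisors of $sn$ and on knowing that the fixed field of $\Phi^d$ has size $p^d$ when $d\mid sn$. Both facts are supplied by Lemma \ref{cyclic1} and Theorem \ref{auto}, so I expect the remaining verifications to be routine bookkeeping.
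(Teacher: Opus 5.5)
Your proof is correct. It works in the same setting as the paper's: every automorphism lies in the cyclic group $\langle\Phi\rangle$ of order $sn$ (Theorem \ref{allauto}), and you read off fixed points of powers of $\Phi$. The key step, however, is organized differently.

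The paper looks at the subgroup of \emph{all} automorphisms fixing $\Bbb F_q$ pointwise and takes the smallest positive power $\Phi^k$ in it. It shows $k=s$ using only the upper bound ``$\Phi^k$ fixes at most $p^k$ elements'' from Theorem \ref{auto} 2), plus the fact that $\Phi^s$ fixes $\Bbb F_q$. This identifies the pointwise stabilizer economically. But the uniqueness clause behind ``well defined'' in Definition \ref{normal} is only asserted there, namely that no proper subgroup of $\langle\Phi^s\rangle$ also has fixed set exactly $\Bbb F_q$.

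You start instead from $H=\langle\Phi^s\rangle$ and prove that uniqueness directly. Every subgroup is $\langle\Phi^d\rangle$ with $d\mid sn$, and its fixed set is exactly $\Bbb F_{p^d}$ by Theorem \ref{auto} 3), so the fixed set determines $d$. The identification of $H$ with the full pointwise stabilizer then follows as a corollary.

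Your route needs the exact fixed field of each $\Phi^d$ rather than just a cardinality bound. In exchange, it settles the well-definedness claim completely. In effect it is the Galois correspondence for $\Bbb Z_p\subset\Bbb F_{p^{sn}}$ in miniature.
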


 \begin{proof} Consider the field extension $\Bbb Z_p\subset \Bbb F_{q^n}$ of degree $sn$.  By Theorem \ref{aalfirst}, its Galois   group  is well defined. Moreover, the Galois group  is the cyclic group of order $sn$, generated by the Frobenius  map $\Phi $. Any group of automorphisms of the field $\Bbb F_{q^n}$ is a subgroup of the group $G(\Bbb F_{q^n},\Bbb Z_p$). In particular, any such group  is a cyclic group. Thus, the
 subgroup of all automorphisms of the field $\Bbb F_{q^n}$, which fixes the field $\Bbb F_q$, is a cyclic group. Let $\Phi^k$ be the smallest positive power of $\Phi$ which belongs to this subgroup. The power $k$ can not be smaller than $s$. Indeed,  the set of all fixed points of $\Phi^k$  contains  at most  $p^k$ elements, which is smaller than $q=p^s$. On the other hand, the subgroup  contains the map $\Phi^s$. Indeed, the set of fixed points of the map $\Phi^s$ is equal to $\Bbb F_{q}$. Thus, the Galois group $G(\Bbb F_{q^n},\Bbb F_q)$ is well defined. Moreover, it is the cyclic group on order $n$ generated by the map $\Phi^s$.
\end{proof}

\section{Galois correspondence for finite extension of finite field}\label{seven}
In this section, we discuss the Galois correspondence between the set of all subgroups  of the Galois Group $G(F,K)$ of an extension of degree $n$ of a finite field $K$ and  the set of all intermediate subfields  of the extension $K\subset F$.

Consider a degree $n$ extension $\Bbb F_q\subset \Bbb F_{q^n}$ of the finite field $\Bbb F_q$, where $q=p^s$. The intermediate fields for this extension and the subgroups of the  Galois  group of this extension are in one-to-one correspondence with the divisors $m$ of the number $n$.  Indeed, the following lemma holds.

\begin{lemma}  With any divisor $m$ of $n$ such that $n=mk$ there exists:
\begin{enumerate}                                              \item the unique intermediate  field $B$ such that the degree
  of the extension $B\subset \Bbb F_{q^n}$ is equal to
  $m$; for such field $B$ the degree of the extension $\Bbb F_q\subset B$ is equal to $k$;

\item the unique subgroup $G$ in the
Galois  group   $G(\Bbb F_q,\Bbb F_{q^n})\sim \Bbb Z_n$
which is a cyclic group of order $m$.  For such a group $G$
the factor group  $G(\Bbb F_q,\Bbb F_{q^n})/G$  is
the  cyclic group of order $k$.
\end{enumerate}
\end{lemma}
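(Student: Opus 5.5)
The lemma has two independent parts. Part 1 is a reformulation of Theorem \ref{cordeq}, and part 2 is a statement about the cyclic group $\Bbb Z_n$, settled by Lemma \ref{cyclic1} together with Theorem \ref{galoiagroup}. I would treat them separately and then note that both sides are indexed by the same divisor $m$ of $n=mk$.

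For part 1, apply Theorem \ref{cordeq} to the extension $\Bbb F_q\subset \Bbb F_{q^n}$ of degree $n$ with the divisor $k$. This gives a unique intermediate field $B$ with $[B:\Bbb F_q]=k$. By the Degree formula, $[\Bbb F_{q^n}:B]=n/k=m$. Conversely, if $B'$ is any intermediate field with $[\Bbb F_{q^n}:B']=m$, the Degree formula forces $[B':\Bbb F_q]=k$, so $B'=B$ by the uniqueness in Theorem \ref{cordeq}.

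The step I expect to be the main obstacle is this uniqueness, because the proof of Theorem \ref{cordeq} only gives it ``up to isomorphism''. I would make it concrete using part 2 of Theorem \ref{auto} and item 2 of Section \ref{prelim}. Any subfield of $\Bbb F_{q^n}$ with $q^k=p^{sk}$ elements consists of roots of $x^{p^{sk}}-x$. In $\Bbb F_{q^n}$ this polynomial has at most $p^{sk}$ roots, so the subfield equals the full set of roots. That set is determined by $k$ alone, namely the fixed points of $\Phi^{sk}$, which gives genuine uniqueness of $B$.

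For part 2, Theorem \ref{galoiagroup} identifies $G(\Bbb F_{q^n},\Bbb F_q)$ with the cyclic group of order $n$ generated by $\Phi^s$. Since $m$ divides $n$, Lemma \ref{cyclic1} gives exactly one subgroup $G$ of order $m$, and it is cyclic. Explicitly, $G$ is generated by $(\Phi^s)^k=\Phi^{sk}$, which has order $n/k=m$. The quotient of the group by $G$ has order $n/m=k$. It is cyclic because it is generated by the image of $\Phi^s$, as any quotient of a cyclic group is. Finally, I would remark that the field $B$ of part 1 is exactly the fixed field of this $G$: the fixed points of $\Phi^{sk}$ are the roots of $x^{q^k}-x$, by Theorem \ref{auto}. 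This matching of the two parts prepares the Galois correspondence, although the lemma itself only asserts that each divisor yields one field and one subgroup.
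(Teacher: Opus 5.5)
Your proof is correct and follows the same route as the paper: Theorem~\ref{cordeq} plus the Degree formula for part 1, and Theorem~\ref{galoiagroup} plus Lemma~\ref{cyclic1} for part 2. Your additions are correct refinements that the paper's short proof leaves implicit: you show that any intermediate field of degree $k$ must be the full root set of $x^{q^k}-x$, so uniqueness holds literally and not just up to isomorphism; you give the explicit generator $\Phi^{sk}$ of $G$; and you observe that $B$ is the fixed field of $G$.
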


\begin{proof}  By theorem \ref{cordeq}, for any divisor $k$ of $n$ such that $n=k m$, there is a unique intermediate field $B$. $\Bbb F_q\subset B\subset \Bbb F_{q^n}$ such that the degree of the extension $\Bbb F_q\subset B$ is equal to $k$. For such extension $B$ the degree of the extension $B\subset \Bbb F_{q^n}$ is equal to $m$, since the degree of the extension $\Bbb F_q\subset \Bbb F_{q^n}$ is equal to $n=km$.

By Theorem \ref{galoiagroup},  the Galois group of the extension $\Bbb F_q \subset \Bbb F_{q^n}$ is the cyclic group $\Bbb Z_n$  of order $n$. The cyclic group of order $n$ contains a unique subgroup $G$ of order $m$ (and the order of any subgroup divides the order of group). The factor group of $\Bbb Z_n$ by a cyclic group of order $m$ is the cyclic group of order $k$.
  \end{proof}

The Galois correspondence provides a natural one-to-one
correspondence   between the set of intermediate fields in a normal field extension and the set of all subgroups of the Galois group of
 the extension.
\begin{theorem}[Galois correspondence for extensions of finite fields] \label{Galcor} Let $\Bbb F_q\subset \Bbb F_{q^n}$ be the degree $n$ extension of the  field $\Bbb F_q$. Then the Galois correspondence provides a bijection between the set of all subgroups of the Galois group $G(\Bbb F_{q^n},  \Bbb F_q)$ and the set of all intermediate subfields of the extension.

Moreover, if an intermediate field $B$  corresponding to a subgroup $G$, then the factor group $G(\Bbb F_{q^n},\Bbb F_q)/G$ acts naturally on the field $B$, and the field of all fixed  elements  of this action on $B$ is the field $\Bbb F_q$.
\end{theorem}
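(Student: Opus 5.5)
We will make the Galois correspondence explicit and match it with the divisor parametrization of the preceding lemma. Write $F=\Bbb F_{q^n}$, $q=p^s$. By Theorem \ref{galoiagroup}, $G(F,\Bbb F_q)$ is cyclic of order $n$ and generated by $\Psi=\Phi^s$. To a subgroup $G$ we assign its fixed field $F^G=\{a\in F: g(a)=a \text{ for all } g\in G\}$. This is clearly a subfield containing $\Bbb F_q$. To an intermediate field $B$ we assign the group of all elements of $G(F,\Bbb F_q)$ that fix $B$ pointwise. We will prove that these two maps are mutually inverse.

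Fix a divisor $m$ of $n$ with $n=mk$. By Lemma \ref{cyclic1}, the unique subgroup of order $m$ is $G_m=\langle \Psi^k\rangle=\langle \Phi^{sk}\rangle$. An element is fixed by $G_m$ exactly when it is fixed by the generator $\Phi^{sk}$. Since $sk$ divides $sn$, Theorem \ref{auto} part 3 says this fixed set is the subfield $\Bbb F_{p^{sk}}=\Bbb F_{q^k}$. This field has degree $k$ over $\Bbb F_q$, so by Theorem \ref{cordeq} it is the unique intermediate field $B$ of that degree.

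Now run the correspondence in the opposite direction. Start from an intermediate field $B$. By Theorem \ref{cordeq} it has degree $k$ over $\Bbb F_q$ for some $k\mid n$, and hence $B=\Bbb F_{q^k}$. Apply Theorem \ref{galoiagroup} to the extension $B\subset F$ of degree $m$: the automorphisms of $F$ fixing $B$ form the cyclic group of order $m$ generated by $\Phi^{sk}$, which is exactly $G_m$. The two assignments therefore invert each other. Subgroups and intermediate fields are both indexed bijectively by the divisors of $n$, so we get a bijection. The main care needed here is to check that the fixed group of $B$ is genuinely all of $G_m$ and not a proper subgroup of it. This is exactly the "well defined" content of Theorem \ref{galoiagroup}.

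For the second statement, the Galois group is commutative, so $G_m$ is normal in it. Every $g\in G(F,\Bbb F_q)$ maps $B$ into itself, because $B$ is the set of roots in $F$ of $x^{q^k}-x$ and $g$ fixes the coefficients of this polynomial. Restricting to $B$ gives a homomorphism from $G(F,\Bbb F_q)$ to the automorphisms of $B$ whose kernel is precisely $G_m$. It therefore induces an injective action of the factor group, which is cyclic of order $k$ and generated by the class of $\Psi$, with $\Psi|_B=\Phi^s|_B$. The fixed elements of this action on $B$ are the elements of $B$ fixed by $\Phi^s$. By Theorem \ref{auto}, the fixed set of $\Phi^s$ in $F$ is $\Bbb F_q$, which lies in $B$, so the fixed field of the action is $\Bbb F_q$. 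As a consistency check, this agrees with Theorem \ref{galoiagroup} applied to the extension $\Bbb F_q\subset B$ of degree $k$.
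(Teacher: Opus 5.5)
Your proof is correct and follows the paper's route: intermediate fields are the $\Bbb F_{q^k}$ with $k\mid n$, and Theorem \ref{galoiagroup} applied to $\Bbb F_{q^k}\subset\Bbb F_{q^n}$ identifies the pointwise stabilizer of $\Bbb F_{q^k}$ with $\langle\Phi^{sk}\rangle$, the unique subgroup of order $m=n/k$. One small correction: the care you flag is really that this stabilizer is not \emph{larger} than $G_m$, since your first step already shows $G_m$ fixes $B$. Your treatment of the second part is also right: restriction to $B$ has kernel $G_m$, so the quotient is cyclic of order $k$ and its fixed field is $\Bbb F_q$. The paper's own text garbles this point, stating the quotient has order $m$ and is the Galois group of $B\subset\Bbb F_{q^n}$.
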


\begin{proof}Galois group of the extension $\Bbb F_q \subset \Bbb F_{q^n}$ is the cyclic group of order $n$ generated by the map $\Phi^s$. The intermediate fields $B$, such that the degree of the extension $\Bbb F_q\subset B$ is equal to $k=n:m$, is the field  $B=\Bbb  F_{q^k}$.  The Galois group of the extension $\Bbb  F_{q^k}\subset \Bbb F_{q^n}$ is the cyclic group $G$ of order $m=n:k$, generated by the map $\Phi^{sk}$. Since $G$ is the Galois group of the extension $B\subset \Bbb F_{q^n}$, it fixes pointwise all elements of the field $B$, and it fixes elements of $B$ only.

One can describe the factor group $(\Bbb F_{q^n},\Bbb F_q)/G$  as a factor group of a cyclic group of order $n$  generate by the map $\Phi^s$ by its cyclic subgroup generated by the map $\Phi ^{sk}$. Such factor group is a cyclic group of order $m=n:k$
which is the Galois group of the extension $\Bbb F_{q^{sk}}\subset \Bbb F_{q^n}$, i.e., it is the Galois group
$G(B,\Bbb F_{p^n})$.
\end{proof}

\section{Galois theory}\label{seceight}
In this section, we present, without proof, basic theorems of Galois theory which widely generalize the  results on extensions of finite fields which we discuss above. Proofs of these theorems will be presented in the course later.

\begin{definition} An element $a\in E$ is called a {\it primitive element} for a finite field extension $K\subset E$   if $E$ can be obtained by adjoining $a$ to the field $K$.
\end{definition}

\begin{theorem}[Primitive Element Theorem]\label{primel} For any separable finite field extension $K\subset E$ there exists  a primitive element $a\in E$.
\end{theorem}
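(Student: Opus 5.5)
First I split off the case of a finite field $K$. There Corollary \ref{finprimel} already settles everything: a generator of the cyclic group $E^*$ is a primitive element. So from now on I assume $K$ is infinite.

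Since $K\subset E$ is finite, $E=K(a_1,\dots,a_r)$ for finitely many algebraic elements. By induction on $r$, it suffices to treat $r=2$: show that $K(a,b)=K(c)$ for some $c$. The induction needs every intermediate extension to stay separable. This holds because a subfield of a separable extension is separable over $K$, which we use as a basic fact from class.

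For $E=K(a,b)$ with $K$ infinite, I look for $c=a+tb$ with $t\in K$. Let $f$ and $g$ be the minimal polynomials of $a$ and $b$ over $K$. Pass to a splitting field $L$ of $fg$, and list the roots of $f$ as $a=a_1,\dots,a_m$ and the roots of $g$ as $b=b_1,\dots,b_l$. By separability of $g$, the $b_j$ are distinct. Choose $t\in K$ avoiding the finitely many values $(a_i-a)/(b-b_j)$ for $j\neq 1$; this is possible because $K$ is infinite. With this choice, $a+tb\neq a_i+tb_j$ whenever $j\ne1$.

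Now set $c=a+tb$ and $h(x)=f(c-tx)\in K(c)[x]$. Then $h(b)=f(a)=0$. On the other hand, $h(b_j)\neq 0$ for $j\neq 1$, since $c-tb_j=a_i$ would contradict the choice of $t$. Hence $\gcd(g,h)=x-b$, because $g$ has simple roots and $b$ is their only common root. The gcd of two polynomials with coefficients in $K(c)$ lies in $K(c)[x]$, since the Euclidean algorithm does not leave the field. Therefore $b\in K(c)$, then $a=c-tb\in K(c)$, and so $K(a,b)=K(c)$.

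The main obstacle is this gcd step. It needs two things: that $g$ has distinct roots, which is exactly where separability enters, and that a gcd computed over $L$ agrees with the one computed over $K(c)$. The finite-field case also has to be handled separately, as done at the start, because the choice of $t$ relies on $K$ being infinite.
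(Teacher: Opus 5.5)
The paper gives no proof of this theorem. It sits in Section~\ref{seceight}, which is explicitly presented without proofs, and the only case the paper actually establishes is the finite one, Corollary~\ref{finprimel}, via cyclicity of $\Bbb F_q^*$ (Theorem~\ref{multfield}). Your proposal is correct. The finite case is exactly the paper's argument, and for infinite $K$ you give the standard $c=a+tb$ proof. That proof also covers the degenerate cases you did not mention explicitly. Your excluded values include $(a_1-a)/(b-b_j)=0$, so $t\neq 0$ and hence $h\neq 0$ whenever $\deg g\geq 2$. If instead $\deg g=1$, then $b\in K$ and $\gcd(g,h)=x-b$ trivially. Only the separability of $b$ is used, so your two-element step in fact shows that $K(a,b)$ is simple as soon as $b$ alone is separable.

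The useful comparison is with the theorem following Lemma~\ref{gcd}, the paper's sharper finite-field statement. It finds a generator of $\Bbb Z_p(m_1,\dots,m_n)$ of the multiplicative form $m_1^{k_1}\cdots m_n^{k_n}$, using the gcd structure of the cyclic group $\Bbb F_q^*$. Your infinite case instead finds an additive generator $a+tb$ by avoiding finitely many bad values of $t$.

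\begin{itemize}
\item The multiplicative route needs no size condition on $K$ and never mentions separability, which is automatic over finite fields. But it rests on cyclicity of the multiplicative group, which is special to finite fields.
\item The additive route works whenever $|K|>m(l-1)$ and shows exactly where separability enters, namely in the gcd step. But over a small finite $K$ the bad set can exhaust $K$; for example, $K=\Bbb F_2$ with $a=b$ a generator of $\Bbb F_4$.
\end{itemize}

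So your case split is genuinely needed: the two methods cover complementary situations, and together they give the full theorem that the paper defers.
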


Theorem \ref{primel}  widely  generalized
Corollary \ref{finprimel} which plays a similar role for extensions of finite fields.

Above we defined normal field  extensions and their  Galois group (see  Definition \ref{normal}).

\begin{theorem} A finite field extension $K\subset F$ is normal, if and only if it is separable and the field $F$ is a splitting field of some polynomial over $K$. The Galois group $G(K,F)$ of the normal extension is well defined, i.e., there   is a unique group of automorphisms $G(K,F)$ of $F$ with fixes an  element $a\in F$ if and only if  $a\in K$.
\end{theorem}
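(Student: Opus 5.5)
We have to prove the general theorem: a finite extension $K\subset F$ is normal (in the sense of Definition \ref{normal}) if and only if it is separable and $F$ is a splitting field of some polynomial over $K$. We also have to show that the group $G(K,F)$ is unique. The plan is to pass through the group $\mathrm{Aut}_K(F)$ of all automorphisms of $F$ fixing $K$ pointwise, and to compare its order with the degree $[F:K]$.

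\textbf{Key bound (Artin's lemma).} If $G$ is a finite group of automorphisms of $F$ with fixed field $F^G$, then $[F:F^G]\le |G|$. To prove it, take $n>|G|$ elements $x_1,\dots,x_n\in F$. The linear system $\sum_j g(x_j)c_j=0$ for $g\in G$ has more unknowns than equations, so it has a nonzero solution $(c_j)$ in $F$. Choose such a solution with the fewest nonzero entries and normalize one entry to $1$. Applying some $h\in G$ to it and subtracting gives a shorter solution, which must therefore vanish. Hence all $c_j$ lie in $F^G$. The equation for $g=\mathrm{id}$ then shows that the $x_j$ are linearly dependent over $F^G$. I expect this minimal-support argument to be the main technical step.

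\textbf{Uniqueness.} Conversely, $|\mathrm{Aut}_K(F)|\le [F:K]$ always holds. I would prove this by induction on a tower $K\subset K(a_1)\subset\cdots$, counting extensions of embeddings root by root, in the same way as Lemma \ref{bound} counts images of a generator. Now suppose $G$ is as in Definition \ref{normal}. Then $G\subset \mathrm{Aut}_K(F)$, and $F^G=K$, so
\[
[F:K]\le |G|\le |\mathrm{Aut}_K(F)|\le [F:K].
\]
All these are equalities, so $G=\mathrm{Aut}_K(F)$. This gives well-definedness.

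\textbf{Normal $\Rightarrow$ separable splitting field.} Assume $G$ exists. For $a\in F$, let $a=a_1,\dots,a_r$ be the distinct elements of its $G$-orbit, and set $f_a=\prod_i (x-a_i)$. The coefficients of $f_a$ are $G$-invariant, so they lie in $K$. Since $a$ is a root of $f_a$, the minimal polynomial of $a$ divides $f_a$, and therefore has distinct roots, all lying in $F$. Hence $F$ is separable over $K$. Take a basis $b_1,\dots,b_m$ of $F$ over $K$. Then $F$ is the splitting field over $K$ of the product $\prod_j f_{b_j}$.

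\textbf{Separable splitting field $\Rightarrow$ normal.} Let $F$ be the splitting field of $f$ over $K$, with $F/K$ separable. The extension-counting argument from the uniqueness step becomes sharp here. Each intermediate step $K(a_1,\dots,a_{i})\subset K(a_1,\dots,a_{i+1})$ adjoins a root of a separable polynomial that splits in $F$, so every embedding extends in exactly as many ways as the degree of that step. This yields $|\mathrm{Aut}_K(F)|=[F:K]$. Put $G=\mathrm{Aut}_K(F)$. Then $K\subset F^G$, and by Artin's lemma $[F:F^G]\le |G|=[F:K]$. The Degree formula then forces $F^G=K$, so $G$ witnesses normality.
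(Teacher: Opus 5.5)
The paper gives no proof of this theorem; it is stated without proof in the last section and deferred to later in the course, so there is no route to compare with. Your plan, combining Artin's bound $[F:F^G]\le|G|$ with the embedding bound $|\mathrm{Aut}_K(F)|\le[F:K]$, is the standard one. Your uniqueness step and your proof of ``normal $\Rightarrow$ separable splitting field'' are correct.

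The gap is in the last step. You write $[F:F^G]\le|G|=[F:K]$ and say the Degree formula then forces $F^G=K$. It does not. Because $K\subset F^G$, the inequality $[F:F^G]\le[F:K]$ holds for every group of $K$-automorphisms; for the trivial group, $F^G=F$ and $[F:F^G]=1$. So it carries no information. Artin's lemma bounds $[F:F^G]$ from above, but to force $F^G=K$ you need the lower bound $[F:F^G]\ge|G|$.

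That lower bound comes from a tool you already have. Every element of $G$ fixes $F^G$ pointwise, so $G\subset\mathrm{Aut}_{F^G}(F)$. Your embedding-count bound, applied over the base field $F^G$ instead of $K$, then gives $|G|\le[F:F^G]$. Hence $[F:K]=|G|\le[F:F^G]\le[F:K]$, so $[F^G:K]=1$ and $F^G=K$. Artin's lemma is needed only in the uniqueness step, where you use it in the correct direction.

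Two points in the exact count should also be made explicit. Take the $a_i$ to be roots of $f$.
\begin{itemize}
\item When extending $\sigma\colon K(a_1,\dots,a_i)\to F$, the roots you count are those of $\sigma(p)$, where $p$ is the minimal polynomial of $a_{i+1}$ over $K(a_1,\dots,a_i)$. It has exactly $\deg p$ distinct roots in $F$ because it divides the minimal polynomial of $a_{i+1}$ over $K$. That polynomial is fixed by $\sigma$, is separable, and splits in $F$ since it divides $f$.
\item A $K$-embedding $F\to F$ is automatically onto, being an injective $K$-linear map of a finite-dimensional space. So you are indeed counting automorphisms.
\end{itemize}
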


Let $K \subset F$ be a field extension of degree $n$. For an intermediate field $B$ we denote  by $k$ and $m$ degrees of the extensions $K \subset B$ and $B \subset F$, respectively. Then $k$ and $m$  are divisors of $n$, and  $n = km$.
However, in general, it is not true that for every divisor $k$ of $n$ there exists a unique intermediate field $B$ such that the degree of the extension $K\subset B$ is equal to $k$. There may be several such subfields, or none at all.

Similarly, the Galois group of a normal extension of degree $n$ has order $n$, and, by Lagrange's theorem,  orders of its  subgroups divide $n$. Yet, in general, for a given divisor $k$ of $n$, there need not be a unique subgroup of order $k$; there may be several such subgroups, or none.

Nevertheless, the following Galois Correspondence Theorem holds for every normal field extension.

\begin{theorem}[Galois correspondence] \label{genGalcor} Let $K\subset F$ be a normal extension of degree $n$. Then the Galois correspondence provides a bijection between the set of all subgroups of the Galois group $G(F,K)$ and the set of all intermediate subfields of the extension.

Moreover, for an intermediate field $B$ the extension $K\subset B$ is normal if and only if $B$ corresponds to a normal divisor $G$ of the Galois group $G(K,F)$. In that case, there is a natural action  of  the factor group $G(F,K)/G$
on the intermediate field $B$ which  fixes all elements of the field $K$. It provides an isomorphism between the factor group $G(F,K)/G$  and the
Galois group $G(K,B)$ of the extension $K\subset B$.
 \end{theorem}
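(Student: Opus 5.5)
We prove Theorem~\ref{genGalcor} using the general results of this section that are stated without proof: the characterization of normal extensions (separable plus splitting field) and uniqueness of the Galois group. We also need one extra ingredient: \emph{Artin's lemma}, saying that if a finite group $H$ of automorphisms of $F$ has fixed field $F^H$, then $[F:F^H]\le |H|$. We prove it by the standard linear-algebra argument: take more than $|H|$ elements of $F$, solve a homogeneous system with $|H|$ equations over $F$, and choose a minimal nontrivial solution to obtain a contradiction. We also need Dedekind's independence of characters, which gives $|{\rm Aut}(F/L)|\le [F:L]$ for any intermediate field $L$. Together these give the key equality $|G(F,K)|=n$ for a normal extension, and more generally $[F:F^H]=|H|$.

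The two maps are $H\mapsto F^H$ and $B\mapsto G_B=\{g\in G(F,K): g|_B={\rm id}\}$. First we show that $B\subset F$ is normal for every intermediate $B$. If $F$ is the splitting field of a separable polynomial over $K$, it is also one over $B$, so by the normality criterion $B\subset F$ is normal. Its Galois group is a group of automorphisms fixing exactly $B$, and by the uniqueness statement it equals $G_B$. Hence $F^{G_B}=B$. Conversely, for a subgroup $H$ we have $H\subset G_{F^H}$. By Artin's lemma $[F:F^H]\le |H|$, while $|G_{F^H}|=[F:F^H]$. Therefore $H=G_{F^H}$, the two maps are mutually inverse, and we have a bijection. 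The step I expect to be the real obstacle is the counting $|G_B|=[F:B]$. It needs the construction of exactly $[F:B]$ extensions of the identity on $B$ to $F$, by induction on degree, using that separability gives distinct roots. I would prove this extension lemma first.

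For the normality part, we use $G_{g(B)}=gG_Bg^{-1}$, which is immediate from the definitions. If $G_B$ is normal in $G(F,K)$, then every $g$ maps $B$ to itself, because $g(B)$ is the fixed field of $gG_Bg^{-1}=G_B$. Restriction then gives a homomorphism $G(F,K)\to{\rm Aut}(B)$ with kernel $G_B$. The image $G(F,K)/G_B$ is a group of automorphisms of $B$ whose fixed field is $B\cap F^{G(F,K)}=K$. So $K\subset B$ is normal by Definition~\ref{normal}, and by uniqueness this image is $G(B,K)$, which gives the isomorphism.

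Conversely, suppose $K\subset B$ is normal, so $B$ is a splitting field over $K$ by the normality criterion. Any $g\in G(F,K)$ sends roots of the corresponding polynomial to roots, hence $g(B)=B$. Then $G_B=G_{g(B)}=gG_Bg^{-1}$, so $G_B$ is a normal subgroup.
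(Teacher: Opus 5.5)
The paper gives no proof of this theorem. It sits in the last section, which states its results without proof and defers them to later in the course. The only Galois correspondence the paper actually proves is Theorem~\ref{Galcor} for finite fields, and that argument is enumerative:
every intermediate field is $\Bbb F_{q^k}$ for a divisor $k$ of $n$;
its Galois group is computed directly as the cyclic subgroup generated by $\Phi^{sk}$;
the bijection comes from both sides being indexed by divisors of $n$;
normality is automatic because the group is abelian.
The paper itself notes, just before the statement, that uniqueness of subfields and subgroups of a given degree or order fails in general, so that method cannot extend.

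Your argument is the standard Artin-style proof. It uses the maps $H\mapsto F^H$ and $B\mapsto G_B$, Artin's lemma plus Dedekind's bound to get $[F:F^H]=|H|$, and the conjugation rule $G_{g(B)}=gG_Bg^{-1}$ for the normal-subgroup part. Structurally it parallels the finite-field proof: you show each $B\subset F$ is normal with Galois group inside $G(F,K)$. The difference is that you replace the explicit computation by counting, which is what makes it work for arbitrary normal extensions. Relative to the results you import (the normality criterion and uniqueness of the Galois group, both stated without proof in the same section), the proof is correct and complete.

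There are two small corrections.

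\textbf{The uniqueness step for $G_B$.} The sentence ``by the uniqueness statement it equals $G_B$'' is circular as written. The uniqueness statement applies to a group whose fixed field is exactly $B$, and that is precisely what you are trying to establish for $G_B$. Your own counting fixes it. From $n=[F:K]\le|G(F,K)|\le|\mathrm{Aut}(F/K)|\le n$ you get $G(F,K)=\mathrm{Aut}(F/K)$, hence $G_B=\mathrm{Aut}(F/B)$. This group contains $G(F,B)$ and has order at most $[F:B]=|G(F,B)|$. So $G_B=G(F,B)$ and $F^{G_B}=B$. When you invoke the criterion for $B\subset F$, also note that this extension is separable, since minimal polynomials over $B$ divide those over $K$.

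\textbf{The ``real obstacle'' you flag.} It is not an obstacle in your proof. Once the criterion tells you $B\subset F$ is normal, $|G_B|=[F:B]$ follows from Artin and Dedekind exactly as above, with no extension lemma. The extension lemma (counting the $[F:B]$ extensions of the identity using distinct roots) is what one needs to prove the direction ``separable splitting field $\Rightarrow$ normal'' of the criterion itself. So the deep content of the theorem lives in that imported criterion, not in a step you still owe.
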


\end{document}